\theoremstyle{plain}
\newtheorem{theorem}{Theorem}[section]
\newtheorem{lemma}[theorem]{Lemma}
\newtheorem{remark}[theorem]{Remark}
\newtheorem{proposition}[theorem]{Proposition}
\theoremstyle{remark}
\newtheorem{definition}[theorem]{Definition}
\newtheorem*{example}{Example}
\newcommand{\var}[1]{{\rm Var}\left(#1\right)}
\newcommand{\OT}{\mathsf{OT}}
\newcommand{\bs}{\boldsymbol}
\DeclareMathOperator*{\argmin}{arg\, min}
\newcommand{\cA}{\mathcal{A}}
\newcommand{\cF}{\mathcal{F}}
\newcommand{\pa}{\partial}
\newcommand{\h}{\hat}
\begin{document}

\begin{frontmatter}
\title{Stability and sample complexity of divergence regularized optimal transport}
\runtitle{Stability and sample complexity of divergence regularized optimal transport}

\begin{aug}
\author[A]{\inits{E.}\fnms{Erhan}~\snm{Bayraktar}\ead[label=e1]{erhan@umich.edu}}
\author[B]{\inits{S.}\fnms{Stephan}~\snm{Eckstein}\ead[label=e2]{stephan.eckstein@uni-tuebingen.de}\thanks{corresponding author}}
\author[C]{\inits{X.}\fnms{Xin}~\snm{Zhang}\ead[label=e3]{xin.zhang@univie.ac.at}}
\address[A]{Department of Mathematics, University of Michigan, Michigan, USA\printead[presep={,\ }]{e1}}

\address[B]{Department of Mathematics, University of T\"{u}bingen, T\"{u}bingen, Germany\printead[presep={,\ }]{e2}}

\address[C]{Department of Mathematics, University of Vienna, Vienna, Austria\printead[presep={,\ }]{e3}}
\end{aug}

\begin{abstract}
We study stability and sample complexity properties of divergence regularized optimal transport (DOT). First, we obtain quantitative stability results for optimizers of DOT measured in Wasserstein distance, which are applicable to a wide class of divergences and simultaneously improve known results for entropic optimal transport. Second, we study the case of sample complexity, where the DOT problem is approximated using empirical measures of the marginals.
We show that divergence regularization can improve the corresponding convergence rate compared to unregularized optimal transport. To this end, we prove upper bounds which exploit both the regularity of cost function and divergence functional, as well as the intrinsic dimension of the marginals.
Along the way, we establish regularity properties of dual optimizers of DOT, as well as general limit theorems for empirical measures with suitable classes of test functions.
\end{abstract}

\begin{keyword}
\kwd{Entropic optimal transport}
\kwd{$f$-divergence}
\kwd{intrinsic dimension}
\kwd{regularization}
\end{keyword}

\end{frontmatter}

\section{Introduction}
Regularization of optimal transport problems introduces various benefits, mainly in terms of computational tractability (see, e.g., \cite{blanchet2018towards,cuturi2013sinkhorn,lin2019efficient,peyre2019computational,semi-discrete}). The most well-known regularization method is the entropic one, leading to the so-called entropic optimal transport (EOT) problem, allowing for the use of Sinkhorn's algorithm (see \cite{cuturi2013sinkhorn,franklin1989scaling, sinkhorn1967concerning}). Beyond its computational aspects, EOT enjoys other beneficial properties
compared to unregularized OT, such as stability of optimizers with respect to perturbations of marginals or cost function (see, e.g., \cite{carlier2020differential,deligiannidis2021quantitative,eckstein2022quantitative,ghosal2022stability,keriven2022entropic,nutz2022stability}), and improved estimation rates when only finitely many sample points for each marginal are known (see, e.g., \cite{genevay2019sample, klatt2020empirical, mena2019statistical, rigollet2022sample}). While different aspects, like numerical stability and sparseness of optimal couplings, motivate the use of other divergences aside from the entropic one (see, e.g., \cite{blondel2018smooth,eckstein2021computation, essid2018quadratically,lorenz2021quadratically,seguy2017large}), similar results regarding stability and sample complexity have not yet been established beyond the entropic case. This paper aims to fill this gap.

\subsection{Overview of the main results}
We first introduce the optimal transport problem with divergence regularization in Section \ref{subsubsec:drot} before giving an overview of the main results of this paper in Sections \ref{subsubsec:quantstab} and \ref{subsubsec:samplecomp}.

\subsubsection{Divergence regularized optimal transport}
\label{subsubsec:drot}
The divergence regularized optimal transport (DOT) problem takes the form
\begin{equation}\label{eq:DOTintro}
\OT^\varepsilon_\varphi(\mu_1, \mu_2) := \inf_{\pi \in \Pi(\mu_1, \mu_2)} \int c \,d\pi + \varepsilon D_\varphi(\pi, \mu_1 \otimes \mu_2),
\end{equation}
where $\mu_1, \mu_2$ are the marginals, $\Pi(\mu_1, \mu_2)$ is the set of couplings between $\mu_1$ and $\mu_2$, $c$ is the cost function, $D_\varphi$ is the divergence functional and $\varepsilon > 0$ is the regularization parameter; the precise notation is detailed in Section \ref{sec:notation}. While we keep the exposition simple in the introduction, all results will also apply to the version of DOT with multiple marginals $\mu_1, \dots, \mu_N$.

Regarding the motivation for the DOT problem, we shortly want to illustrate some key aspects of this problem with a focus on the differences between varying divergence functionals $D_\varphi$. In general, under strict convexity of $\varphi$, the addition of the term $\varepsilon D_\varphi(\pi, \mu_1 \otimes \mu_2)$ transforms the optimization from a linear to a strictly convex problem.
A key feature of DOT is that the optimizer $\pi^*$ of $\OT^\varepsilon_\varphi(\mu_1, \mu_2)$ is then unique and characterized by its density 
	\begin{equation}\label{eq:density}
	\frac{d\pi^*}{d\mu_1 \otimes \mu_2}(x_1, x_2) = \psi'((h_1^*(x_1)+h_2^*(x_2)-c(x_1, c_2))/\varepsilon),
	\end{equation}
where $\psi$ is the convex conjugate of $\varphi$ and $h_1^*$ and $h_2^*$ are dual optimizers (cf.~Lemmas \ref{lem:primalexistence} and \ref{lemma:dualoptimizer}, and \cite[Theorem 2.2]{eckstein2021computation}).
For the entropic optimal transport problem, $\varphi(x) = x\log(x)$, we have $\psi'(y) = \exp(y-1)$, and thus optimizers will have the same support as the product measure. I.e., while the optimizers will be very smooth in the sense of having a smooth density, they will never be sparse! This is in stark contrast to optimizers of the unregularized ($\varepsilon = 0$) problem, where optimizers are usually supported on a low-dimensional manifold (cf.~\cite{mccann2012rectifiability}). For polynomial divergences like $\varphi(x) = \frac{1}{\alpha}(x^{\alpha}-1)$, we have $\psi'(y) = \max\{0, y\}^{\beta-1}$, where $\frac{1}{\alpha}+\frac{1}{\beta} = 1$, allowing for sparseness. This means that polynomial regularization leads to optimizers which are less smooth than optimizers for entropic regularization, but which can recover some key features of the unregularized problem (like sparseness, cf.~\cite{blondel2018smooth,dessein2018regularized}). We illustrate the difference between various regularization methods in a simple example in Figure \ref{fig:support}.
\begin{figure}\label{fig:support}
	\includegraphics[width=0.32\textwidth]{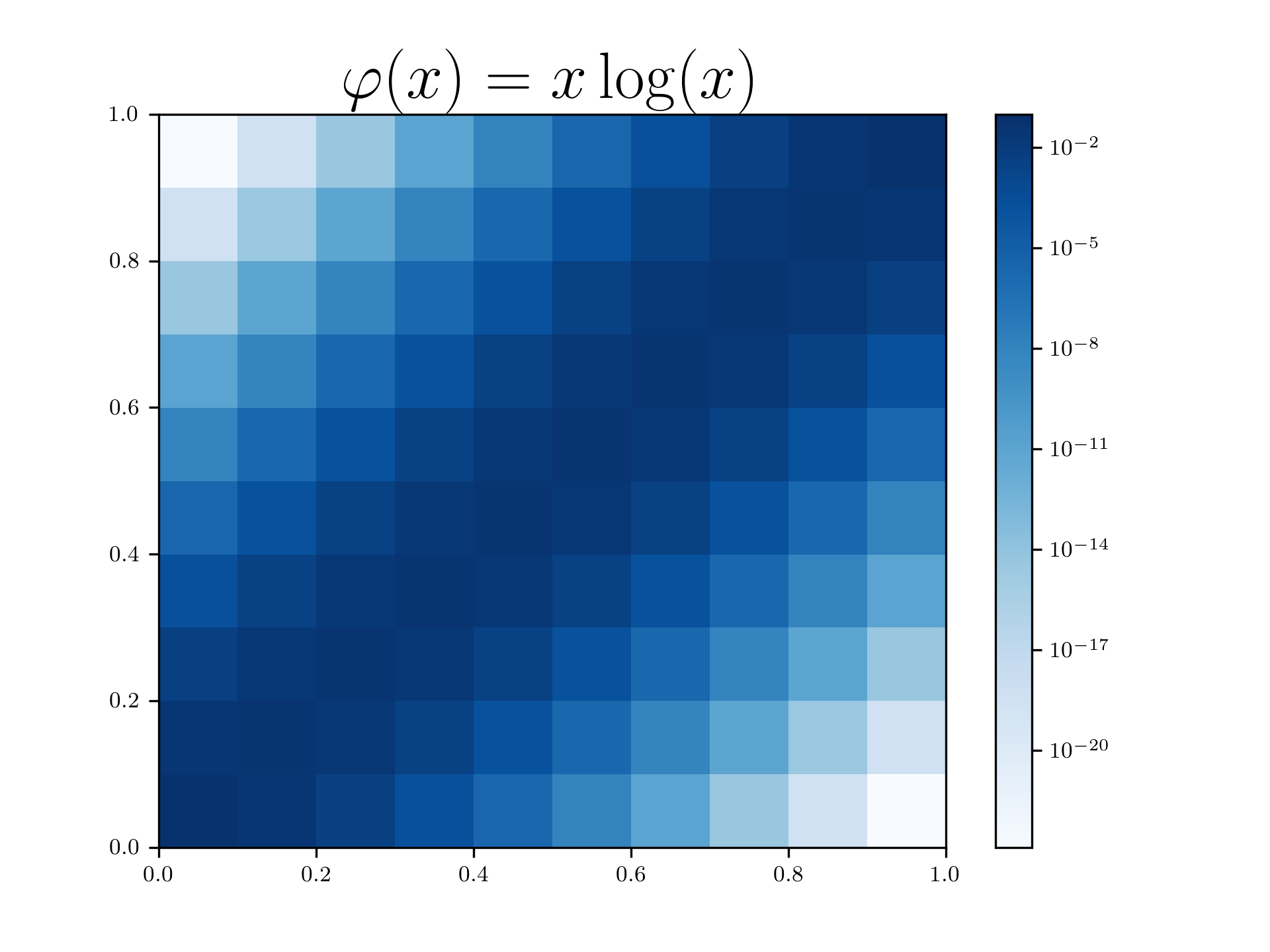}
	\includegraphics[width=0.32\textwidth]{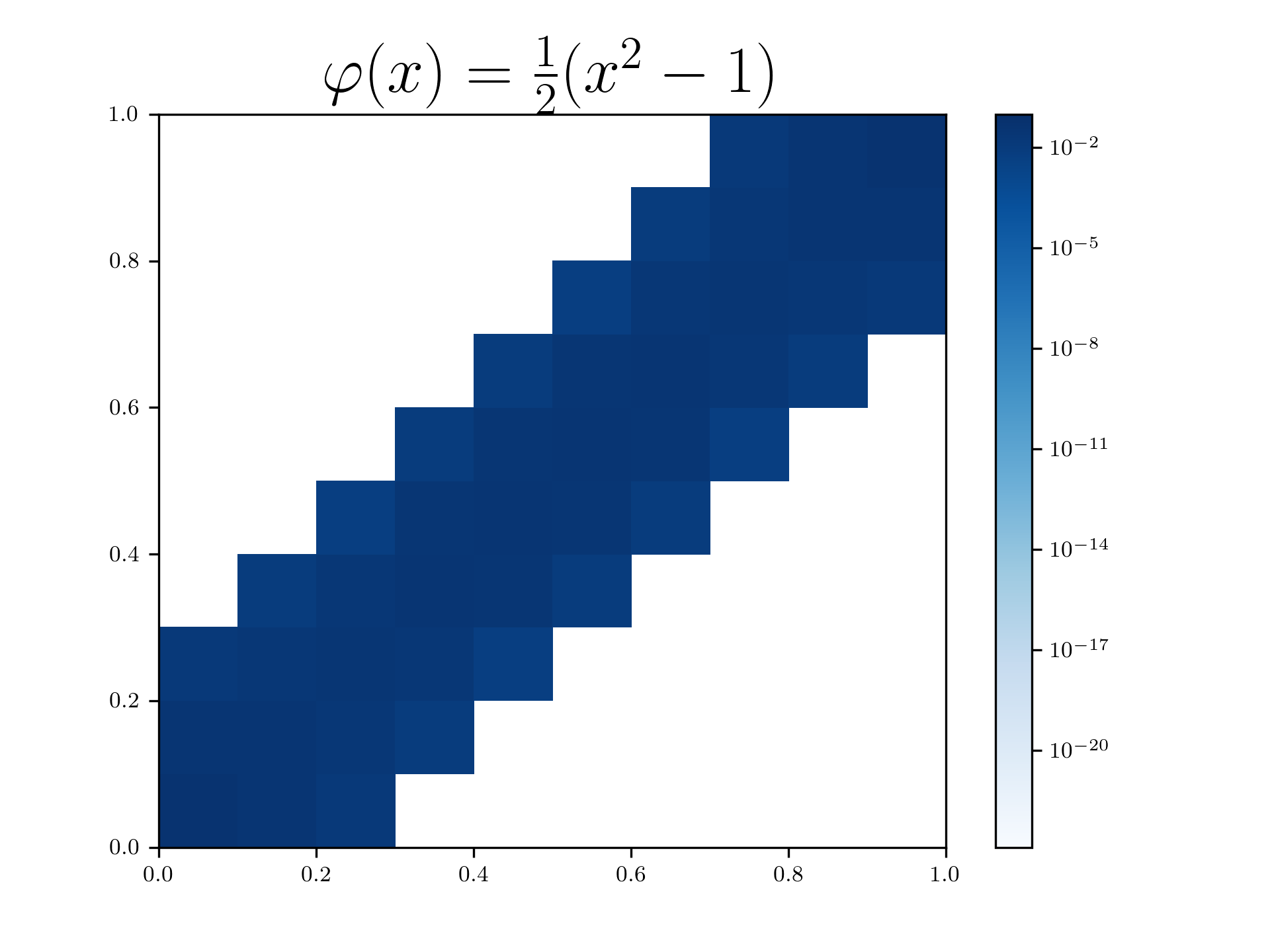}
	\includegraphics[width=0.32\textwidth]{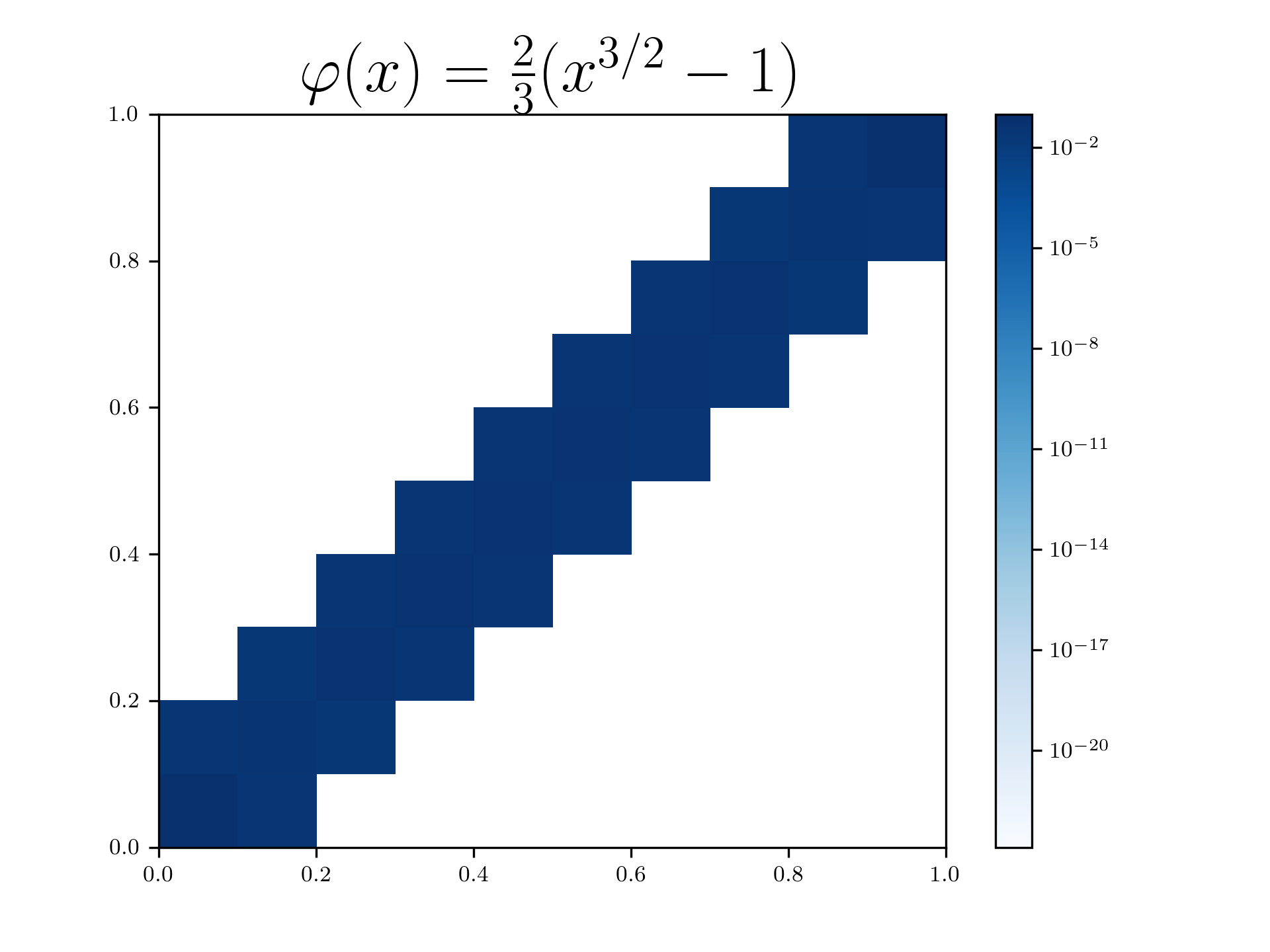}
	\caption{A log-scale visualization of optimizers $\pi^*$ of $\OT^\varepsilon_\varphi(\mu_1, \mu_2)$ for different $\varphi$ defining the regularization divergence $D_\varphi$. This example illustrates that structural properties of $\pi^*$, like the size of the support, can vary greatly depending on the type of regularization. From left to right, the respective sizes of the support of $\pi^*$ are 100 (full support), 44 and 28. The problem parameters are $\varepsilon=100$, $\mu_1 = \mu_2 = \frac{1}{10}\sum_{i=0}^9 \delta_{i/9}$ and $c(x_1, x_2) = (x_2-x_1)^2$.}
\end{figure}

This structural difference of the optimizers given by \eqref{eq:density} is particularly important for small $\varepsilon$. In this regard entropic regularization is often difficult to deal with since the values in the density \eqref{eq:density} scale exponentially in $\varepsilon$, leading for instance to instabilities in algorithmic methods (cf.~\cite{schmitzer2019stabilized}). Particularly for dual solution methods based on neural networks (cf.~\cite{eckstein2021computation, seguy2017large}), polynomial regularization can therefore be advantageous, since optimization methods for the neural network parameters based on gradient descent are less ill-behaved for small $\varepsilon$.

\subsubsection{Quantitative stability of optimizers}
\label{subsubsec:quantstab}
Regarding the obtained results, we first discuss the quantitative stability results for $\OT_{\varphi}^\varepsilon(\mu_1, \mu_2)$. While continuity of the optimal values, i.e., of the mapping $(\mu_1, \mu_2) \mapsto \OT_{\varphi}^\varepsilon(\mu_1, \mu_2)$ is already treated in \cite{eckstein2022quantitative}, this paper establishes continuity of the mapping
\begin{align}\label{eq:introstab}
(\mu_1, \mu_2) &\mapsto \pi^* := \argmin_{\pi \in \Pi(\mu_1, \mu_2)} \int c \,d\pi + \varepsilon D_\varphi(\pi, \mu_1 \otimes \mu_2).
\end{align}
The main result in this regard is given in Theorem \ref{thm:quantstability}, whereby differences in both marginals and optimizers are measured in a Wasserstein distance. Assuming weakened versions of Lipschitzianity for $c$ (see Definition \ref{def:lipschitz}) and strong convexity for $\varphi$ (see Definition \ref{def:strongconvex}), Theorem \ref{thm:quantstability} shows that the mapping in \eqref{eq:introstab} is H\"{o}lder continuous of order $\frac{1}{2p}$ when measuring distances in the $p$-th Wasserstein distance and restricting to marginals with uniformly bounded $2p$-th moments. The method of proof follows the line of \cite[Theorem 3.11]{eckstein2022quantitative}, where a similar result for EOT is established. There are, however, some notable differences. The proof of \cite[Theorem 3.11]{eckstein2022quantitative} relies on the Pythagorean theorem for relative entropy (c.f.~\cite{csiszar1975divergence}) and a subsequent application of a generalized Pinsker inequality \cite{bolley2005weighted}. While tools for the latter step are available also for non-entropic divergences (e.g., \cite{agrawal2021optimal}), there is no suitable counterpart for the Pythagorean theorem for general divergences. In the entropic case, combining these tools establishes a strong convexity property for the functional
\[
\Pi(\mu_1, \mu_2) \ni \pi \mapsto \int c \,d\pi + \varepsilon D_\varphi(\pi, \mu_1\otimes \mu_2)
\] 
around its optimizer.
We circumvent the use of a Pythagorean-like theorem and directly establish a strong convexity property for this functional in Lemma \ref{lem:strongconvexity} for a general class of divergences. Interestingly, the resulting estimate in Theorem \ref{thm:quantstability} requires weaker moment assumptions than \cite[Theorem 3.11]{eckstein2022quantitative}, even though it is also applicable to the EOT problem. The reason is that the generalized Pinsker inequality \cite{bolley2005weighted} used within \cite[Theorem 3.11]{eckstein2022quantitative} requires exponential moments, and is applicable to general measures.  Lemma \ref{lem:strongconvexity}  on the other hand is more specifically tailored to optimizers of DOT problems, where non-exponential moments are sufficient.


\subsubsection{Sample complexity}
\label{subsubsec:samplecomp}
While the problem of sample complexity can be regarded as a particular case of stability with respect to the marginals, the general treatment of the previous section falls short of obtaining the strongest possible results in this case. Indeed, while the previous section is only based on Lipschitz properties of the cost function, the study of sample complexity allows us to exploit differentiability properties of both divergence and cost function.

To clarify the problem, \emph{sample complexity} refers to the estimation of DOT knowing only empirical measures $\hat \mu_1^n, \hat \mu_2^n$ using $n$ sample points of the marginals. This means, we want to bound the expected difference
\[
\Delta^n := \mathbb{E}\left[\big|\OT^\varepsilon_\varphi(\mu_1, \mu_2) - \OT^\varepsilon_\varphi(\hat \mu_1^n, \hat \mu_2^n)\big|\right],
\]
where the expectation is taken over independent sample points used for $\hat \mu_1^n$ and $\hat \mu_2^n$. The main result in this regard is given in Theorem \ref{thm:samplecomplexity}, showing 
\[
\Delta^n \lesssim n^{-s/d_\mu},
\]%
where $s$ is the order of differentiability of the cost $c$ and the convex conjugate of $\varphi$, and $d_\mu > 2s$ is the intrinsic dimension of $\mu_1$ and $\mu_2$. The form of intrinsic dimension used (see Definition \ref{def:intrinsic}) is based on covering numbers and is in particular upper bounded by the entropic dimension of the support of the marginals, while also allowing for non-compact support.

To explain the idea of the proof of Theorem \ref{thm:samplecomplexity} and the relation to the literature, let us focus on compactly supported measures $\mu_1$ and $\mu_2$ on $\mathbb{R}^d$. It is known that for unregularized OT ($\varepsilon = 0$), $\Delta^n$ is of order $n^{-1/d}$ and thus a victim of the curse of dimensionality (see, e.g., \cite{fournier2015rate,2023arXiv230518636L}). On the other hand, for EOT with suitably regular cost function, $\Delta^n$ is of order $n^{-1/2}$ (see \cite{genevay2019sample,mena2019statistical}) and thus avoids the curse of dimensionality. One reason for this improved rate of convergence of EOT compared to OT is that densities of optimizers are very regular for EOT. In other words, the dual potentials for EOT are infinitely often differentiable. Thus, to bound $\Delta^n$ in the entropic case, one only has to test the difference between $(\hat \mu_1^n, \hat \mu_2^n)$ and $(\mu_1, \mu_2)$ using very regular functions. Since it is known that regularity acts as a countercurse to dimensionality (c.f.~\cite{kloeckner2018empirical}), the curse of dimensionality can thus be avoided for EOT. In this regard, we should also mention the recent work \cite{rigollet2022sample}, where similar parametric rates are obtained without using smoothness arguments but instead working with strong convexity estimates for the exponential function.

The proof of Theorem \ref{thm:samplecomplexity} follows the idea used in \cite{mena2019statistical} for EOT by establishing regularity properties of optimal dual potentials in Proposition \ref{prop:regularity}. This regularity mainly depends on two things, the regularity of the cost function $c$, and the regularity of the convex conjugate of $\varphi$, which we denote by $\psi$. Proposition \ref{prop:regularity} shows that if $c$ and $\psi$ are $s$-times continuously differentiable, then the optimal dual potentials are $(s-1)$-times continuously differentiable. The reason we lose one order is that the defining equations for the dual potentials (the analogue to the Schr\"{o}dinger equations for EOT, established in Lemma \ref{lemma:dualoptimizer}), already use the first order derivative of $c$ and $\psi$. While differentiability of order $(s-1)$ would only lead to a rate of order $n^{-(s-1)/d}$ in Theorem \ref{thm:samplecomplexity}, we additionally impose a Lipschitz condition on the order $s$ derivative of $\psi$, which transfers to the order $(s-1)$ derivative of the optimal dual potentials. 

To establish Theorem \ref{thm:samplecomplexity} using the results given by Proposition \ref{prop:regularity}, Lemma \ref{lemma:dualcon} shows that it suffices to control differences between $(\mu_1, \mu_2)$ and $(\hat \mu_1^n, \hat \mu_2^n)$ with respect to suitably regular test functions as obtained by Proposition \ref{prop:regularity}. While general results (see, e.g.,  \cite[Theorem 1.4]{kloeckner2018empirical}) would directly lead to the rate $n^{-(s-1)/d}$, we establish more specific results in Theorem \ref{thm:empiricalrate} aimed at exploiting the particular regularity structure of optimal dual potentials, as well as the intrinsic dimension of the marginals. If $C^{s-1, 1}_1$ is the class of $(s-1)$-times continuously differentiable functions where the order $(s-1)$ derivative is $1$ Lipschitz and the infinity-norms of all derivatives are bounded by $1$, and $d_{\mu} > 2s$ is the intrinsic dimension of $\mu_1$, then Theorem \ref{thm:empiricalrate} shows that
\begin{align*}
	\mathbb{E}\left[\sup_{f \in C_{1}^{s-1,1}(\mathbb{R}^d)} \int f \,d(\mu - \h \mu^n) \right] \lesssim
	\begin{cases}
	 n^{-s/d_\mu} \quad  &\text{if $d_{\mu} >2s$}, \\
	 \log(n) \, n^{-1/2} \quad &\text{if $d_{\mu}=2s$ }, \\
	 n^{-1/2} \quad  &\text{if $d_{\mu}<2s$}.
	\end{cases}
	\end{align*}

Convergence rates of empirical measures defined through testing against regular functions were also obtained in \cite{genevay2019sample,kloeckner2018empirical,mena2019statistical} by analyzing the space of test functions using reproducing kernel Hilbert spaces and Rademacher complexity, wavelet decompositions, and covering numbers, respectively. Our results additionally incorporate the information of intrinsic dimension of measures (as, e.g., used in \cite{dudley1969speed, weed2019sharp}), and our proof relies on the chaining method and Taylor expansion of test functions. The particular structure using just $(s-1)$ order derivatives with additional Lipschitz continuity for the highest order is specifically tailored to the regularity of the dual potentials as obtained by Proposition \ref{prop:regularity}. 


\subsection{Related literature}
Recent literature has caught up on establishing properties for arbitrary DOT problems aside from EOT. 
One large topic is the study of the approximation properties as $\varepsilon \rightarrow 0$.
While convergence for EOT has initially been studied in \cite{carlier2017convergence, leonard2012schrodinger} and explicit rates for discrete problems are long known (see \cite{cominetti1994asymptotic, weed2018explicit}), by now general rates of convergence (see \cite{carlier2022convergence, chizat2020faster, pal2019difference}), convergence of optimizers (see \cite{nutz2021entropic, pooladian2021entropic}) and second order expansions around $\varepsilon = 0$ (see \cite{conforti2021formula, gigli2021second}) have been established. While the initial method used in \cite{carlier2017convergence} to show convergence to the unregularized problem for $\varepsilon \rightarrow 0$ transfers to general DOT problems (see also \cite[Proposition 2.8]{eckstein2020robust}), rates of convergence have only recently been established in \cite{eckstein2022convergence}. Related convergence results for general DOT problems have also been established in \cite{lorenz2022orlicz}, where the focus is on absolutely continuous marginals.

A different string of literature focuses on computational aspects of DOT problems. Benefits of more general versions of DOT compared to EOT have been utilized for instance by \cite{blondel2018smooth,dessein2018regularized,eckstein2021computation, essid2018quadratically,lorenz2021quadratically,seguy2017large}. Recent works have also established both theoretical and computational tractability of a generalized version of Sinkhorn's algorithm (see \cite{di2020optimal, terjek2022optimal}), while \cite{lorenz2021quadratically} suggests the use of different algorithms to solve the DOT problem with quadratic divergence. 

In addition, recent works have investigated asymptotic statistical property of regularized optimal transport. 
For EOT problems, limits of dual potentials and primal optimal couplings can be found in \cite{2022arXiv220708683G,2022arXiv220707427G}, and sample complexity and central limit theorem have been established in \cite{mena2019statistical,del2023improved,rigollet2022sample} for subgaussian measures which extends the result of \cite{genevay2019sample} on bounded domains. A unified framework is provided in 
\cite{2022arXiv220504283G} to obtain limit distributions for regularized optimal transport problems including sliced Wasserstein distance,  smooth distances with compactly supported kernels, and EOT. Moreover, \cite{2022arXiv221111184S} provides limit distributions of $f$-divergences such as $KL$-divergence, $\chi^2$-divergence, squared Hellinger distance, and total variation distance, which could be potentially useful to find limit distributions of the corresponding DOT problems.

\subsection{Structure of the paper}
The remainder of the paper is structured as follows: Section \ref{sec:notation} details the notation and states preliminary results for DOT problems, and defines necessary concepts that will be used in the later sections. 
Section \ref{sec:stability} gives the results related to the quantitative stability of the DOT problem. Section \ref{sec:empiricalconvergence} gives results for the rate of convergence of empirical measures, exploiting regularity of the test functions and the intrinsic dimension of the measure. Finally, based on the results in Section \ref{sec:empiricalconvergence}, Section \ref{sec:samplecomplexity} establishes the sample complexity when approximating DOT using empirical measures of the marginals. 

\section{Notation and preliminaries}\label{sec:notation}
We first detail all used notation in Section \ref{subsec:notation}, before giving preliminary results in Section \ref{subsec:prelim}. Section \ref{subsec:prelim} recalls existence of primal and dual optimizers in Lemmas \ref{lem:primalexistence} and \ref{lemma:dualoptimizer}, and gives necessary definitions which will be used in Section \ref{sec:stability}. 
\subsection{Notation}\label{subsec:notation}
For a polish space $(Y,d_{Y})$, we denote by $\mathcal{P}(Y)$ the set of Borel probability measures on $Y$, by $\mathcal{P}_p(Y)$ for $p \in [1, \infty)$ the subset of measures $\mu$ with finite $p$-th moment, i.e., which satisfy $\int d_{Y}(y,\hat{y})^p \,\mu(dy) < \infty$ for some $\hat{y} \in Y$. For $p \in [1, \infty)$, the $p$-Wasserstein distance $W_p(\mu, \nu)$ between $\mu,\nu \in \mathcal{P}_p(Y)$ is defined via
\begin{align*}
W_p(\mu, \nu)^p &= \inf_{\pi \in \Pi(\mu, \nu)} \int d_{Y}(x, y)^p \,\pi(dx, dy).
\end{align*}

If $Y = \mathbb{R}^d$ and $s \in \mathbb{N}$, we denote by $C^s(Y)$ the set of functions $f : Y \rightarrow \mathbb{R}$ which are $s$ times continuously differentiable, where we use the notation $D^\alpha f = \frac{\partial f}{\partial_1^{\alpha_1} \partial_2^{\alpha_2} \dots \partial_d^{\alpha_d}}$ for $\alpha \in \mathbb{N}_0^d$ with $|\alpha| := \sum_{i=1}^d \alpha_i \leq s$. By $C^{s,1}(Y) \subseteq C^s(Y)$ we denote the subset of functions $f$ where $D^{\alpha} f$ is Lipschitz for $|\alpha|=s$, and by $C^{s,1}_{K}(Y) \subseteq C^{s,1}(Y)$ the subset of functions where $\|D^{\alpha} f \|_{\infty}  \leq K$ for $|\alpha| \leq s$ and $D^{\alpha} f$ is $K$-Lipschitz for $|\alpha|=s$. For a Taylor formula in several variables, we will follow \cite{folland2005higher}, and in particular use the notation $x^\alpha = x_1^{\alpha_1} \dots x_d^{\alpha_d}$ for $x \in \mathbb{R}^d, \alpha \in \mathbb{N}_0^d$. 

For $\varphi : \mathbb{R}_+ \rightarrow \mathbb{R}$ a strictly convex, lower bounded function  with $\varphi(1) = 0$ and $\lim_{x \rightarrow \infty} \varphi(x)/x = \infty$, we will always denote by 
\begin{align}\label{eq:psidef}\psi(y) := \sup_{x \geq 0}(xy - \varphi(x)) \text{ for } y \in \mathbb{R}\end{align} 
its convex dual pair, and refer to $(\varphi, \psi)$ as convex conjugates. The \emph{$\varphi$-divergence} $D_\varphi(Q,P)$ between probabilities $Q,P$ on a common space is defined by
\[
D_\varphi(Q,P) := \int \varphi\left(\frac{dQ}{dP}\right) \,dP \quad\mbox{for}\quad Q\ll P
\]
and $D_\varphi(Q, P) :=\infty$ for $Q \not\ll P$. The $D_{\varphi}$-regularized transport problem $\OT_{\varphi}$ is defined by 
\begin{align*}
\mathcal{F}_{\varphi}^{\bs{\mu}}(\pi) &:= \int c \,d\pi +  D_\varphi(\pi, P), \quad P:= \mu_{1}\otimes \cdots\otimes\mu_{N},\\
\OT_{\varphi}(\bs{\mu}) &:= \inf_{\pi \in \Pi(\bs{\mu})} \mathcal{F}_{\varphi}^{\bs{\mu}}(\pi),
\end{align*}
where $\bs{\mu}=(\mu_1,\dotso,\mu_N)$, $\mu_i \in \mathcal{P}(X_i)$ for polish spaces $(X_i, d_{X_i})$, $X = \prod_{i=1}^N X_i$, $\Pi(\bs{\mu})$ is the set of probability measures on $X$ with $i$-th marginal $\mu_i$, and $c : X \rightarrow \mathbb{R}$ is a measurable cost function. We also denote $X^{-i}=\prod_{j=1,j\not = i}^N X_i$ and $P^{-i}=\otimes_{j=1,j \not =i}^N \mu_i$ for $i=1,\dotso,N$. While usually, the problem includes an additional parameter, $\varepsilon$, as a multiplier for $D_\varphi$, we disregard $\varepsilon$ here, noting that scaling of the cost function $c$ by a factor $\frac{1}{\varepsilon}$ leads to the equivalent problem with $\varepsilon=1$.\footnote{Notably, for homogeneous cost functions like the quadratic cost, this can equivalently be considered as a rescaling of the marginal distributions, cf.~\cite{del2023improved,mena2019statistical}. In the case of bounded marginals, this gives a natural relation between the parameter $\varepsilon$ and the size of the support of the marginals, since we can always normalize to $\varepsilon = 1$ and support on the unit cube by re-scaling the cost function.} We will always assume that for some $p \in [1, \infty]$, $\mu_i \in \mathcal{P}_p(X_i)$, and if $p< \infty$, then $c$ has growth of order $p$ (i.e., $|c(x)| \leq C(1+d_X(x_0, x)^p)$ for some $x_0 \in X, C > 0$), or if $p=\infty$, then $c$ is bounded. 
For the definition of the $p$-Wasserstein distance on $X$, we will use the particular product metric $d_{X, p}(x, y):= \big(\sum_{i=1}^N d_{X_i}(x_i, y_i)^p\big)^{1/p}$.
The distance between two tuples of marginals will often be quantified by
\begin{align*}
W_{p}(\bs \mu;\tilde{\bs \mu}) :=\big(\sum_{i=1}^N W_p(\mu_i, \tilde{\mu}_i)^p\big)^{1/p}.
\end{align*} 
We denote by $\|\cdot\|_{TV}$ the total variation norm between signed measures (normalized to one for probability measures), and for two probability measures $\pi, \tilde\pi$ and a non-negative measurable function $\rho$ on the same space, we use the shorthand $\rho(\pi-\tilde{\pi})$ for the signed measure $\rho \pi - \rho \tilde{\pi}$, where $\rho\pi$ is the measure with Radon-Nikodym density $\rho$ with respect to $\pi$.  
\subsection{Preliminaries}\label{subsec:prelim}
Throughout the paper, unless otherwise specified, let $(\varphi, \psi)$ be convex conjugates, $p \in [1, \infty]$, $\mu_i \in \mathcal{P}_p(X_i)$, $i=1, \dots, N$ and assume $c: X \rightarrow \mathbb{R}$ is continuous with growth of order $p$. Note that this implies that $\int |c| \,dP < \infty$ and thus $\OT_{\varphi}(\bs \mu)$ takes a finite value.

We first recall primal attainment and duality for $\OT_{\varphi}(\bs\mu)$.
\begin{lemma}[Existence of primal optimizers and duality]\label{lem:primalexistence}
	There exists a unique $\pi^* \in \Pi(\bs \mu)$ such that
	\[
	\OT_{\varphi}(\bs \mu) = \mathcal{F}_{\varphi}^{\bs \mu}(\pi^*).
	\]
	Assuming boundedness of $c$, we have the dual formulation 
	\[
	\OT_{\varphi}(\bs \mu) = \sup_{h = \oplus_{i=1}^N h_i} \int h - \psi(h-c) \,dP,
	\]
	where the supremum is taken over measurable and bounded functions $h : X \rightarrow \mathbb{R}$ of the form $h(x) = \sum_{i=1}^N h_i(x_i)$.
	\begin{proof}
		Primal attainment is a direct consequence of the compactness of $\Pi(\bs \mu)$, continuity of $c$ and weak lower-semicontinuity of $\pi \mapsto D_\varphi(\pi, P)$. Uniqueness follows by strict convexity of $\varphi$ and thus of $D_{\varphi}$.
		
		Regarding duality we refer to \cite[Theorem 2.2]{eckstein2021computation}, where we note that therein, $h_i$ are restricted to be continuous, but the weak duality 
		\[
		\OT_{\varphi}(\bs\mu) \geq \sup_{h = \oplus_{i=1}^N h_i} \int h - \psi(h-c) \,dP,
		\]
		follows  easily by using $\psi(h-c) \geq \frac{d\pi^*}{dP} (h-c) - \varphi(\frac{d\pi^*}{dP})$.
	\end{proof}
\end{lemma}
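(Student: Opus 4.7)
The plan is to combine the direct method of calculus of variations for the primal side with a Fenchel--Young inequality and a Fenchel--Rockafellar-type duality for the dual side.

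For primal existence, I would first establish weak compactness of $\Pi(\bs\mu)$: tightness is inherited from that of the individual marginals via Prokhorov, and weak closedness of $\Pi(\bs\mu)$ is standard. The cost integral $\pi\mapsto\int c\,d\pi$ is weakly lower-semicontinuous on $\Pi(\bs\mu)$ under the assumed $p$-growth of $c$ and the uniform $p$-th moment bounds on the $\mu_i$; this one sees by truncating $c$ by $c\wedge M$, using weak convergence for the bounded continuous truncation, and letting $M\to\infty$ with monotone convergence controlled by $\int(1+d_X(x_0,\cdot)^p)\,dP<\infty$. The divergence $\pi\mapsto D_\varphi(\pi,P)$ is weakly lower-semicontinuous by its standard variational representation. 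Hence $\cF_\varphi^{\bs\mu}$ attains its infimum. Uniqueness follows because strict convexity of $\varphi$ makes $\pi\mapsto D_\varphi(\pi,P)$ strictly convex on the domain $\{D_\varphi(\cdot,P)<\infty\}$, while the cost term is linear.

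For weak duality, I would apply Fenchel--Young to the conjugate pair $(\varphi,\psi)$: for any admissible bounded measurable $h=\oplus_{i=1}^N h_i$ and any $\pi\in\Pi(\bs\mu)$ with $\pi\ll P$ (the case $\pi\not\ll P$ is trivial since then $D_\varphi(\pi,P)=\infty$),
\[
\psi(h-c)\geq \tfrac{d\pi}{dP}(h-c)-\varphi\!\left(\tfrac{d\pi}{dP}\right)
\]
pointwise $P$-a.e. Integrating against $P$ and using the crucial identity $\int h\,dP=\sum_{i=1}^N\int h_i\,d\mu_i=\int h\,d\pi$, which holds because $h$ is a sum of one-variable functions and $\pi$ has marginals $\mu_i$, I can rearrange to obtain
\[
\int h-\psi(h-c)\,dP\leq \int c\,d\pi+D_\varphi(\pi,P)=\cF_\varphi^{\bs\mu}(\pi).
\]
Taking the supremum over $h$ on the left and the infimum over $\pi$ on the right yields the ``$\leq$'' half of the dual identity, and this argument requires no boundedness of $c$.

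The reverse inequality is the main obstacle and is where boundedness of $c$ enters. Here I would invoke the Fenchel--Rockafellar-based result \cite[Theorem 2.2]{eckstein2021computation}, which establishes the dual identity with each $h_i$ restricted to bounded continuous functions; boundedness of $c$ is precisely what supplies the qualification/continuity condition needed to pass between primal and dual. To upgrade to bounded \emph{measurable} $h_i$ as in the statement, it suffices to observe that enlarging the admissible class can only increase the supremum, so the cited equality already yields ``$\geq\OT_\varphi(\bs\mu)$'' over bounded measurable separable $h$; combined with the weak duality proved above, this closes the identity.
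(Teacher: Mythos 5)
Your proposal is correct and follows essentially the same route as the paper: primal attainment by the direct method (weak compactness of $\Pi(\bs\mu)$ plus lower semicontinuity of cost and divergence), uniqueness from strict convexity of $\varphi$, weak duality via the Fenchel--Young inequality combined with $\int h\,dP=\int h\,d\pi$ for separable $h$, and the reverse inequality by citing \cite[Theorem 2.2]{eckstein2021computation} and noting that passing from continuous to measurable bounded $h_i$ only enlarges the supremum. You spell out more intermediate steps (Prokhorov, the truncation for lower semicontinuity of the cost integral) than the paper's terse proof, but the logical skeleton and the appeal to the external duality theorem are identical.
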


Similar to the Schr\"{o}dinger equations for entropic OT, we will require dual attainment and corresponding first order optimality conditions for $\OT_\varphi$. In general, $\psi$ is non-decreasing and bounded from below as the convex conjugate of $\varphi$. The following condition is both a regularity condition, and also a stronger version than superlinearity at infinity for $\psi$, i.e., we don't just need that $\lim_{x \rightarrow \infty}\frac{\psi(x)}{x} = \infty$, but we require that $\psi(x)/x \gtrsim x$ for large enough $x$.
\begin{definition}[Dual regularity of $\varphi$]\label{assume1}
	We say $\varphi$ is dual regular, if $\psi \in C^1$ and there exists $C > 0$ such that ${\psi}'(x) \geq x$ for $x \geq C$ and $\psi$ is strictly convex beyond a positive point where it is 1, i.e., 
	\begin{align}\label{eq:strictconvexconjugate}
	{\exists} x_0, \,  \delta \in \mathbb{R}_+: \, \psi'(x_0) = 1 \text{ and } \psi \text{ is strictly convex on } [x_0-\delta, \infty).
	\end{align}%
\end{definition}%
Notably, \eqref{eq:strictconvexconjugate} is satisfied in many cases including entropic divergences and $\alpha$-divergences, c.f.~\cite[Theorem 11.13]{rockafellar2009variational}.

\begin{lemma}[Existence of dual optimizers]\label{lemma:dualoptimizer}
	Assume $\varphi$ is dual regular and $c$ is bounded. Then there exists $h^* = \oplus_{i=1}^N h_i^*$ measurable and bounded satisfying
	\[
	\sup_{h = \oplus_{i=1}^N h_i} \int h - \psi(h-c) \,dP = \int h^* - \psi(h^*-c) \,dP.
	\]
	Further, using the notation $x^{-i} = (x_j)_{j \neq i}$ and $P^{-i} = \otimes_{j\neq i} \mu_j$ for $x \in X$ and $i=1, \dots, N$, there exists a measurable and bounded optimizer $h^*$ satisfying 
	\[
	1 = \int {\psi}'(h^*(x) - c(x)) \,P^{-i}(dx^{-i})
	\]
	for all $x_i \in X_i$ and $i=1, \dots, N$.
	\begin{proof}
		Regarding existence, first note that for any feasible dual function $h$ and $i \in \{1, \dots, n\}$, there exists a function $g_i: X_i \rightarrow \mathbb{R}$ such that $\int {\psi}'(h-c(x_i, \cdot)-g_i(x_i)) \,dP^{-i} = 1$ for all $x_i \in X_i$ which follows since $\psi$ is bounded from below, non-decreasing and superlinear at infinity. Thus, the functional $F(x_i, v) := \int {\psi}'(h-c(x_i, \cdot)-v) \,dP^{-i}$ goes to zero for $v \rightarrow -\infty$ and to infinity for $v \rightarrow \infty$, and due to \eqref{eq:strictconvexconjugate}, it is strictly increasing in $v$ around $F(x_i, v) = 1$. Thus $g_i(x_i)$ is actually the unique solution to $F(x_i, g_i(x_i))=1$, and due to measurability of $F$ in the first argument and $\{x_i : g_i(x_i) \geq a\} = \{x_i : 1 \geq F(x_i, a)\}$ for all $a \in \mathbb{R}$, we find that $g_i$ is measurable. 
		Defining $\tilde{h}_i := h_i - g_i$ and $\var{\tilde{h}_i} := \sup_{x_i} \tilde{h}_i(x_i) - \inf_{x_i} \tilde{h}_i(x_i)$ (and similarly for $c$), we find for $x_i, x_i' \in X_i$ that
		\begin{align*}
		\int \psi'(h^{-i} + \tilde{h}_i(x_i')- c(x_i', \cdot)) \,dP^{-i}  = 1 &= \int \psi'(h^{-i}+ \tilde{h}_i(x_i) - c(x_i, \cdot) ) \,dP^{-i}\\
		&\geq  \int \psi'(h^{-i} +\tilde{h}_i(x_i)- c(x_i', \cdot)  - \var{c}) \,dP^{-i}
		\end{align*}
		and thus $\tilde{h}_i(x_i') \geq \tilde{h}_i(x_i) - \var{c}$ and hence $\var{\tilde{h}_i} \leq \var{c}$. 
		
		Further, $\tilde{h} := h - g_i$ has better dual objective than $h$. Indeed, by convexity of $\psi$, we have $\psi(h-c) \geq \psi(h-c-g_i) + \psi'(h-c-g_i)g_i$ and thus
		\begin{align*}
		\int h-g_i - \psi(h-c-g_i) \,dP &\geq \int h-g_i-\psi(h-c) \,dP + \int g_i \left(\int \psi'(h-c-g_i)\,dP^{-i} \right) \,d\mu_i\\ &= \int h-\psi(h-c) \,dP.
		\end{align*}
		%
		Now, we take a maximizing sequence $(h^n)_n$ and by the above, w.l.o.g.~$\var{h^n_i} \leq \var{c}$ for all $i$. This implies $\var{h^n} \leq N \var{c}$ and since $h^n$ is an optimizing sequence also without loss of generality $-1-\|c\|_\infty \leq \int h \,dP \leq \|c\|_\infty$ and thus $\|h^n\|_\infty \leq 2(N+1) (\|c\|_\infty+1)$.
		
		The sequence $(h^n-c)_{n}$ is uniformly bounded in $L^1(P)$ and thus by Koml\'os Lemma the sequence $(\tilde{h}^n - c) := \frac{1}{n}\sum_{i=1}^n h^i - c$ converges along a subsequence almost-surely to some $h^*-c$, where we note that $\tilde{h}^n$ is by convexity of $\psi$ still a maximizing sequence, and thus $h^*$ is a maximizer by boundedness and dominated convergence. Hereby, $h^*$ can be verified to almost surely be of the form $h^* = \oplus_{i=1}^N h_i^*$ (the property of being a direct sum transfers in almost-sure convergence) and further the above boundedness of $h^n$ transfers to $h^*$.

		Regarding first order conditions, fix $i \in \{1, \dots, N\}$. By taking continuous and bounded test functions $u_i$ and considering optimality of the map
		\[
		t \mapsto \int h^*+t u_i - \psi(h^*+tu_i - c) \, dP,
		\]
		which, after differentiating under the integral sign (which is justified, e.g., by \cite[Theorem 2.27]{folland1999real}), leads to the first order condition
		\[
		\int u_i - {\psi}'(h^*-c) u_i \,dP = 0
		\]
		and since $u_i$ is arbitrary thus 
		\[
		1 = \int {\psi}'(h^*-c) \,dP^{-i}
		\]
		$\mu_i$-almost surely. Further, since this will neither change the objective value nor the other equations for $j \neq i$, we can simply define $h^*_i$ to satisfy this equation pointwise, which is possible by the same argument as at the start of the proof, which yields the claim.
%
	\end{proof}
\end{lemma}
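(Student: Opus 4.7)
My approach is a reduction-and-compactness argument: the dual class $\{h = \oplus_i h_i\}$ is a priori noncompact, so I would first show that one can restrict to a uniformly bounded subclass, and then extract a limit from a maximizing sequence. The core device is a coordinatewise normalization. For any feasible $h$ and fixed $i$, dual regularity of $\varphi$ lets me define $g_i : X_i \to \mathbb{R}$ pointwise by the equation
\[
\int \psi'(h - c - g_i(x_i))\, P^{-i}(dx^{-i}) = 1.
\]
Existence of $g_i(x_i)$ follows from the intermediate value theorem, since the integrand is monotone in the shift, tends to $0$ as the shift $\to -\infty$ (by lower-boundedness of $\psi$ and monotone convergence) and to $+\infty$ as the shift $\to +\infty$ (by the superlinearity $\psi'(x) \ge x$ for $x$ large). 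Uniqueness and measurability of $g_i$ come from the strict convexity assumption \eqref{eq:strictconvexconjugate} on $\psi$ around the level $\psi' = 1$.

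Next I would verify that replacing $h_i$ by $h_i - g_i$ does not worsen the dual objective, via the first-order inequality $\psi(u) \ge \psi(u - g_i) + \psi'(u - g_i)\, g_i$ integrated against $P$: the correction term vanishes by Fubini and the defining equation for $g_i$. Iterating in $i$, I may assume the maximizing sequence is normalized in every coordinate. Comparing the defining equations at two points $x_i, x_i'$ and using monotonicity of $\psi'$ then forces $\operatorname{var}(h_i) \le \operatorname{var}(c)$, hence $\operatorname{var}(h) \le N\operatorname{var}(c)$; together with a trivial two-sided bound on $\int h\, dP$ coming from comparing with the feasible choice $h \equiv 0$, this yields a uniform $\|h\|_\infty$ bound depending only on $N$ and $\|c\|_\infty$. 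I would then apply Komlós' lemma to $(h^n - c)_n \subset L^1(P)$ and pass to Cesàro averages, which remain a maximizing sequence by concavity of the dual objective; bounded convergence gives attainment by some $h^*$, and the direct-sum structure survives pointwise Cesàro averaging.

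For the first-order conditions, I would perturb $h^*$ by $t u_i$ with $u_i : X_i \to \mathbb{R}$ bounded continuous, differentiate under the integral (justified by the uniform bounds and continuity of $\psi'$), and set the $t = 0$ derivative to zero to deduce $\int \psi'(h^*-c)\, dP^{-i} = 1$ for $\mu_i$-a.e.\ $x_i$. A final application of the pointwise normalization from the first step then upgrades the identity to hold for every $x_i$ without altering the objective value or the equations for $j \neq i$.

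I expect the main obstacle to be the compactness step, since the dual problem carries an intrinsic noncompactness: shifts $h_1 \mapsto h_1 + \alpha$, $h_2 \mapsto h_2 - \alpha$ preserve $h_1 \oplus h_2$ but not the individual summands, and beyond this, individual $h_i$ could a priori drift to $\pm\infty$ on small pieces of $X_i$. The normalization trick is precisely what rules both possibilities out, and it relies critically on both parts of dual regularity: superlinearity makes the defining equation for $g_i$ solvable for every $x_i$, and strict convexity near the level set $\{\psi' = 1\}$ pins down the solution uniquely and measurably. Once that structural bound is in place, the Komlós step and the first-order calculus are standard.
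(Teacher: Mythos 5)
Your proposal follows essentially the same route as the paper's proof: normalizing each coordinate via $g_i$ solving $\int \psi'(h-c-g_i)\,dP^{-i}=1$ (using superlinearity for existence and the strict-convexity clause of dual regularity for uniqueness and measurability), observing this improves the dual objective via the convexity inequality $\psi(u)\ge\psi(u-g_i)+\psi'(u-g_i)g_i$, bounding $\operatorname{var}(h_i)$ by $\operatorname{var}(c)$ and $\int h\,dP$ by trivial feasibility, applying Koml\'os on Ces\`aro averages, and deriving the first-order conditions by perturbation followed by a pointwise upgrade. The argument and the identification of where dual regularity enters match the paper's, and no steps are missing.
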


For the quantitative stability results for $\OT_{\varphi}$, we will require the concept of a shadow, c.f.~\cite[Definition 3.1 and Lemma 3.2]{eckstein2022quantitative}.\footnote{We refer to it as "the" shadow, even though the given construction is not always unique. Whenever we refer to the shadow of a measure, any shadow will do.} The shadow $\tilde{\pi}$ of a coupling $\pi \in \Pi(\mu_1, \dots, \mu_N)$ is a particular $W_p$-projection of $\pi$ onto the set $\Pi(\tilde{\mu}_1, \dots, \tilde{\mu}_N)$. In fact, the shadow $\tilde{\pi}$ and $\pi$ are as close as possible as two elements of the differing sets $\Pi(\mu_1, \dots, \mu_N)$ and $\Pi(\tilde{\mu}_1, \dots, \tilde{\mu}_N)$ can be. Aside from this closeness in $W_p$, the shadow $\tilde{\pi}$ enjoys a control on its divergence through $\pi$, more precisely, $D_f(\tilde{\pi}, \tilde{\mu}_1 \otimes \dots \otimes \tilde{\mu}_N) \leq D_f(\pi, \mu_1 \otimes \dots \otimes \mu_N)$. Combining both properties makes the shadow an attractive tool to study stability aspects of regularized optimal transport. The formal definition follows.
\begin{definition}[Shadow]\label{def:shadow}
	Let $\mu_i, \tilde{\mu}_i \in \mathcal{P}_p(X_i)$, $i=1,\dotso,N$. For $\pi \in \Pi(\mu_1, \dots, \mu_N)$, we define its \emph{shadow} $\tilde{\pi} \in \Pi(\tilde{\mu}_1, \dots, \tilde{\mu}_N)$ as the second marginal of 
	\[
	\pi \otimes K \in \mathcal{P}(X \times X),
	\]
	where $K$ is the stochastic kernel given as the point-wise product measure
	\[
	K(x) = K_1(x_1) \otimes K_2(x_2) \otimes \dots \otimes K_N(x_N),
	\]
	where $\theta_i = \mu_i \otimes K_i \in \Pi(\mu_i, \tilde{\mu}_i)$ are $W_p(\mu_i, \tilde{\mu}_i)$ optimizers.
\end{definition}

We also use a condition on cost functions $c$ which generalizes Lipschitz continuity, and holds for instance also for $c(x) = \|x_2-x_1\|^p$ for $x \in \mathbb{R}^d \times \mathbb{R}^d$, c.f.~\cite[Lemma 3.5, Example 3.6]{eckstein2022quantitative}.
\begin{definition}[Weakened Lipschitz condition of $c$]\label{def:lipschitz}
	For $L > 0$, we say that $c$ satisfies $(A_L)$ for marginals $ \bs \mu, \tilde{\bs \mu} \in \mathcal{P}_p(X)$, if 
	\begin{align}\label{eq:AL}
	\left|\int c \,d(\pi - \tilde{\pi})\right| \leq L W_p(\pi, \tilde{\pi})\tag{$A_L$}
	\end{align}
	for all $\pi \in \Pi(\bs \mu), \tilde{\pi} \in \Pi(\tilde{\bs \mu})$.
\end{definition}

Using the concept of the shadow, the stability of the value of $\OT_{\varphi}$ was shown in \cite[Theorem 3.7]{eckstein2022quantitative}.
\begin{lemma}[Continuity of $\OT_{\varphi}$]\label{lem:continuityofOT}
	Assume $c$ satisfies \eqref{eq:AL} for marginals $\bs \mu, \tilde{\bs \mu} \in \mathcal{P}_p(X)$. Then
	\[
	\left| \OT_{\varphi}(\bs \mu) - \OT_{\varphi}(\tilde{\bs \mu})\right| \leq L W_p(\bs \mu, \tilde{\bs \mu}).
	\]
\end{lemma}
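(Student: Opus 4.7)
The plan is to reduce the lemma to a one-sided inequality and then invoke symmetry. I would first pick a primal optimizer $\pi^\ast \in \Pi(\bs\mu)$ of $\OT_\varphi(\bs\mu)$ (available by Lemma \ref{lem:primalexistence}) and construct its shadow $\tilde\pi^\ast \in \Pi(\tilde{\bs\mu})$ following Definition \ref{def:shadow}, i.e., I would choose $W_p$-optimal couplings $\theta_i \in \Pi(\mu_i,\tilde\mu_i)$, form the product kernel $K(x)=\otimes_i K_i(x_i)$ from their disintegrations, and take $\tilde\pi^\ast$ to be the second marginal of $\pi^\ast \otimes K$. The coupling $\pi^\ast\otimes K$ witnesses the bound
\[
W_p(\pi^\ast,\tilde\pi^\ast)^p \;\leq\; \int \sum_{i=1}^N d_{X_i}(x_i,y_i)^p \,(\pi^\ast\otimes K)(dx,dy) \;=\; \sum_{i=1}^N W_p(\mu_i,\tilde\mu_i)^p \;=\; W_p(\bs\mu,\tilde{\bs\mu})^p,
\]
using the product metric $d_{X,p}$ and the fact that each $\theta_i$ is a $W_p$-optimizer.

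Next, the defining property of the shadow gives the divergence monotonicity $D_\varphi(\tilde\pi^\ast, \tilde P) \leq D_\varphi(\pi^\ast, P)$, which is exactly the content of \cite[Lemma 3.2]{eckstein2022quantitative} (and uses convexity of $\varphi$ together with the product structure of the reference measures $P=\otimes_i\mu_i$ and $\tilde P=\otimes_i\tilde\mu_i$). Combining this with the weakened Lipschitz condition \eqref{eq:AL} applied to the pair $(\pi^\ast,\tilde\pi^\ast)\in\Pi(\bs\mu)\times\Pi(\tilde{\bs\mu})$ yields
\[
\OT_\varphi(\tilde{\bs\mu}) \;\leq\; \mathcal{F}_\varphi^{\tilde{\bs\mu}}(\tilde\pi^\ast) \;=\; \int c\,d\tilde\pi^\ast + D_\varphi(\tilde\pi^\ast,\tilde P) \;\leq\; \int c\,d\pi^\ast + L\,W_p(\pi^\ast,\tilde\pi^\ast) + D_\varphi(\pi^\ast, P).
\]
Substituting the $W_p$ bound from the first paragraph and recognizing the right-most expression as $\OT_\varphi(\bs\mu) + L W_p(\bs\mu,\tilde{\bs\mu})$ produces one direction of the desired inequality.

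Finally, the roles of $\bs\mu$ and $\tilde{\bs\mu}$ are symmetric: condition \eqref{eq:AL} is stated symmetrically in the two marginal tuples, so starting instead from a primal optimizer of $\OT_\varphi(\tilde{\bs\mu})$ and taking its shadow in $\Pi(\bs\mu)$ gives the reverse bound, and the two together yield the claim. I do not anticipate a genuine obstacle here, since the heavy lifting (existence/uniqueness of primal optimizers, the divergence-monotonicity property of the shadow, and the variational characterization of $W_p$) is already available from the cited references and the preceding lemmas; the only point requiring a little care is verifying that the product-kernel construction actually transports $\pi^\ast$ to $\tilde\pi^\ast$ at $W_p$-cost no larger than $W_p(\bs\mu,\tilde{\bs\mu})$, which is immediate once one uses the product metric $d_{X,p}$ as defined in Section \ref{subsec:notation}.
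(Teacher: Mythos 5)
Your proof is correct and follows exactly the shadow-based route that the paper invokes by citation to \cite[Theorem 3.7]{eckstein2022quantitative}: construct a shadow of a primal optimizer, use the $W_p$-closeness and divergence monotonicity properties of the shadow together with condition \eqref{eq:AL}, and conclude by symmetry. The only incidental difference is that you prove the inequality $W_p(\pi^*,\tilde\pi^*)\leq W_p(\bs\mu,\tilde{\bs\mu})$ directly from the product-kernel construction, whereas the paper treats this (in fact as an equality) as part of the cited Lemma 3.2; both are sufficient here.
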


\section{Quantitative stability of regularized optimal transport}\label{sec:stability}

%
%

For the quantitative stability result for the optimizers of $\OT_{\varphi}(\bs\mu)$, we will rely on the respective quantitative stability of the optimal values given in Lemma \ref{lem:continuityofOT}.

Recall that a strongly convex function $\varphi \in C^2(\mathbb{R})$ with parameter $m > 0$ satisfies $\varphi''(x) \geq m$, or equivalently $\frac{1}{\varphi''(x)} \leq \frac{1}{m}$. This assumption is too strong for our purposes, since it is not satisfied, e.g., for $\varphi(x) = x \log x$. For the purposes of our paper, a weaker strong convexity assumption is sufficient, which is given in the following definition.
\begin{definition}[Strong convexity assumption for $\varphi$]\label{def:strongconvex}
	For $\lambda_1, \lambda_2 \geq 0$, we say $\varphi$ is $(\lambda_1, \lambda_2)$-convex, if $\varphi \in C^2(\mathbb{R}_+)$ and
	\[
	\frac{1}{\varphi''(x)} \leq \lambda_1 + \lambda_2 x 
	\]
	for all $x \in (0, \infty)$.
\end{definition}

\begin{example}
	We showcase relevant instances of divergence functions $\varphi$ which are both dual regular (c.f.~Definition \ref{assume1}) and also $(\lambda_1, \lambda_2)$-convex for some $\lambda_1, \lambda_2 \geq 0$ (c.f.~Definition \ref{def:strongconvex}).
	
	First, clearly, the entropic case is covered, i.e., $\varphi_{\rm ent}(x) := x \log x$ is both dual regular and $(0,1)$-convex.
	
	Second, divergences of power form satisfy the constraints as long as the power is equal to or below 2. We consider $\alpha$-divergences, which clearly includes the popular $\chi^2$-divergence for $\alpha=2$. Let $\alpha \in (1,2]$ and $\varphi_{\alpha}(x):= \frac{x^{\alpha}-\alpha(x-1)-1}{\alpha(\alpha-1)}$. That $\varphi_{\alpha}$ is dual regular can be obtained from the fact that the convex conjugate $\psi(y)$ equals, up to scaling and shifting, the function $y_+^\beta$, where $\frac{1}{\alpha} + \frac{1}{\beta} = 1$ and thus $\beta \geq 2$. Hence, both $\psi \in C^1$, and further $\psi(x)/x \gtrsim x$ for large $x$ follows. Regarding $(\lambda_1, \lambda_2)$-convexity, $\varphi_\alpha$ has second order derivative $\varphi_{\alpha}''(x)=x^{\alpha-2}$. Therefore by Young's inequality $$\frac{1}{\varphi_{\alpha}''(x)}=x^{2-\alpha} \leq (\alpha-1)+(2-\alpha)x,$$
	and thus $\varphi_{\alpha}$ is $(\alpha-1,2-\alpha)$-convex.
\end{example}

\begin{lemma}[Strong convexity of $\mathcal{F}_{\varphi}^{\bs \mu}$ at optimizer]\label{lem:strongconvexity}
	Assume $\varphi$ is $(\lambda_1, \lambda_2)$-convex, $\pi^*$ is an optimizer of $\OT_{\varphi}(\bs \mu)$, and $\pi \in \Pi(\bs \mu)$. Then, for any measurable $\rho : X \rightarrow \mathbb{R}_+$, 
	\[
	\|\rho (\pi^* - \pi)\|_{TV}^2 \leq C \left(\int |\rho |^2 \,d(P+\pi^* + \pi) \right)  \left(\mathcal{F}_{\varphi}^{\bs \mu}(\pi) - \mathcal{F}_{\varphi}^{\bs \mu}(\pi^*)\right),
	\]
	where $C = 4\max\{\lambda_1,\lambda_2\}$.
\end{lemma}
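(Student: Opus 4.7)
The plan is to establish a quantitative strong convexity of the objective $\mathcal{F}_{\varphi}^{\bs\mu}$ at its minimizer $\pi^*$, and then apply Cauchy--Schwarz to transfer the resulting weighted $L^2$ lower bound into a bound on the TV norm weighted by $\rho$. Without loss of generality $\pi \ll P$ (otherwise $D_\varphi(\pi,P) = +\infty$ and the inequality is trivial); write $f = d\pi/dP$ and $f^* = d\pi^*/dP$, so that $\|\rho(\pi^*-\pi)\|_{TV} = \int \rho\,|f-f^*|\,dP$.

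The first step is a pointwise strong convexity estimate for $\varphi$: for all $a,b \geq 0$ and $t \in [0,1]$,
\[
(1-t)\varphi(a) + t\varphi(b) - \varphi\bigl((1-t)a + tb\bigr) \;\geq\; \frac{t(1-t)(b-a)^2}{2\bigl(\lambda_1 + \lambda_2 (a+b)\bigr)}.
\]
I would prove this by Taylor's theorem with integral remainder, writing $\varphi(a)$ and $\varphi(b)$ around $\mu := (1-t)a + tb$, taking the convex combination so the linear terms cancel, and using the $(\lambda_1,\lambda_2)$-convexity bound $\varphi''(x) \geq 1/(\lambda_1+\lambda_2 x)$. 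Since every intermediate point of integration lies in $[a\wedge b, a\vee b]$, it is bounded above by $a+b$, which lets one uniformly lower bound $\varphi''$ along the Taylor path by $1/(\lambda_1+\lambda_2(a+b))$ and factor this denominator out of the integrals. The cases $a=0$ or $b=0$ can be recovered by continuity. Integrating the pointwise bound against $P$ with $a = f^*(x)$, $b = f(x)$, and using that the cost term is linear in $\pi$ and thus vanishes in the strong convexity term, gives a strong convexity estimate for $\mathcal{F}_{\varphi}^{\bs\mu}$. Since $\pi_t := (1-t)\pi^* + t\pi \in \Pi(\bs\mu)$ by convexity of the marginal constraints, optimality of $\pi^*$ gives $\mathcal{F}_{\varphi}^{\bs\mu}(\pi_t) \geq \mathcal{F}_{\varphi}^{\bs\mu}(\pi^*)$, whence
\[
t\bigl(\mathcal{F}_{\varphi}^{\bs\mu}(\pi) - \mathcal{F}_{\varphi}^{\bs\mu}(\pi^*)\bigr) \;\geq\; \frac{t(1-t)}{2}\int \frac{(f-f^*)^2}{\lambda_1 + \lambda_2(f+f^*)}\,dP.
\]
Dividing by $t$ and letting $t \downarrow 0$ yields the quadratic growth estimate
\[
\mathcal{F}_{\varphi}^{\bs\mu}(\pi) - \mathcal{F}_{\varphi}^{\bs\mu}(\pi^*) \;\geq\; \frac{1}{2}\int \frac{(f-f^*)^2}{\lambda_1 + \lambda_2(f+f^*)}\,dP.
\]

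The final step is Cauchy--Schwarz: decomposing the integrand as $\rho\,|f-f^*| = \rho\sqrt{\lambda_1+\lambda_2(f+f^*)} \cdot |f-f^*|/\sqrt{\lambda_1+\lambda_2(f+f^*)}$ and applying CS gives
\[
\left(\int \rho|f-f^*|\,dP\right)^{\!2} \leq \int \rho^2\bigl(\lambda_1 + \lambda_2(f+f^*)\bigr)\,dP \cdot \int \frac{(f-f^*)^2}{\lambda_1 + \lambda_2(f+f^*)}\,dP.
\]
The first factor equals $\lambda_1 \int \rho^2\,dP + \lambda_2 \int \rho^2\,d(\pi^*+\pi) \leq \max\{\lambda_1,\lambda_2\} \int \rho^2\,d(P+\pi^*+\pi)$, and the second is at most $2(\mathcal{F}_{\varphi}^{\bs\mu}(\pi) - \mathcal{F}_{\varphi}^{\bs\mu}(\pi^*))$ by the quadratic growth estimate, which combine to give the stated bound (with a constant $2\max\{\lambda_1,\lambda_2\}$, dominated by the $C = 4\max\{\lambda_1,\lambda_2\}$ in the statement). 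The main obstacle is the pointwise strong convexity estimate: the $(\lambda_1,\lambda_2)$-convexity assumption is an inverse-type bound on $\varphi''$, so one must carefully bound the intermediate argument of $\varphi''$ in the Taylor remainder from above by $a+b$ in order to factor out a clean, integrable denominator. This is what allows the full result to cover non-globally-strongly-convex divergences such as the entropic case $\varphi(x) = x\log x$, where the usual strong convexity constant degenerates at infinity.
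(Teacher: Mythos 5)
Your proposal is correct and follows essentially the same route as the paper's proof: a second-order Taylor expansion of $\varphi$ combined with optimality of $\pi^*$ along the segment $(1-t)\pi^*+t\pi$, the bound $1/\varphi''\leq\lambda_1+\lambda_2 x$ evaluated at points dominated by $f+f^*$, and a weighted Cauchy--Schwarz inequality to convert the resulting $L^2(P)$-type coercivity into the weighted TV bound. The only differences are organizational (a pointwise convexity-gap inequality plus the limit $t\downarrow 0$, versus the paper's first-order condition plus integral-remainder form, with Cauchy--Schwarz applied before rather than after bounding $1/\varphi''$), and your bookkeeping even yields the slightly sharper constant $2\max\{\lambda_1,\lambda_2\}$, which of course implies the stated one.
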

\begin{proof}
	Without loss of generality, we assume that $\cF^{\bs \mu}_\varphi(\pi) < +\infty$. Since $\pi^*$ is an optimizer, the function $[0,1] \ni t \mapsto  \cF^{\bs \mu}_\varphi ((1-t) \pi^*+ t \pi  )$ is  increasing in $t$, and hence its first order derivative is non-negative at $t=0$. Therefore we obtain that
	\begin{align}\label{eq:strongconv1}
	\int c \, d (\pi -\pi^*) + \int \varphi'(\rho_{\pi^*})( \rho_{\pi}-\rho_{\pi^*}) \, dP \geq 0,
	\end{align}
	where $ \rho_{\pi}:= \frac{d \pi }{d P}$, $ \rho_{\pi^*}:= \frac{d \pi^*}{dP}$.  
	
	By the Taylor expansion of function $\varphi$ 
	\begin{align*}
	\varphi(y) =\varphi(x)+(y-x) \varphi'(x)+ (y-x)^2 \int_0^1 dt \int_0^t \varphi''(x+s(y-x) ) ds,
	\end{align*}	
	we get that 
	\begin{align*}
	\cF^{\bs \mu}_\varphi (\pi)-\cF^{\bs \mu}_\varphi( \pi^*) = &\int c \, d (\pi -\pi^*) + \int \varphi(\rho_{\pi}) - \varphi(\rho_{\pi^*}) \, d P \\
	= & \int c \, d (\pi -\pi^*) + \int \varphi'(\rho_{\pi^*})( \rho_{\pi}-\rho_{\pi^*}) \, d P \\
	&+  \int  (\rho_{\pi}-\rho_{\pi^*})^2 \int_0^1 dt \int_0^t \varphi''(\rho_{\pi^*}+ s(\rho_{\pi}-\rho_{\pi*})) \, ds \, dP \\
	\geq & \int  \int_0^1 (\rho_{\pi}-\rho_{\pi^*})^2 (1-t) \varphi''(\rho_{\pi^*}+ t(\rho_{\pi}-\rho_{\pi*})) \, dt \,dP,
	\end{align*}	
	where the last inequality is due to \eqref{eq:strongconv1}.

	Denoting $\Delta = \rho_{\pi}-\rho_{\pi^*}$, 
	by the Cauchy-Schwartz inequality, we obtain 
	\begin{align*}
	& \left( \int  \int_0^1 \Delta^2 (1-t) \varphi''(\rho_{\pi^*}+ t \Delta) \, dt \,d P \right) \left( \int \int_0^1 \frac{(1-t) \rho^2}{\varphi''(\rho_{\pi^*}+t\Delta)} \, dt \, d P  \right) \notag \\
	& \geq \left( \int \int_0^1 (1-t)  |\rho \Delta|  \, dt \, d P \right)^2 =\frac{1}{4} \lVert \rho(\pi-\pi^*) \rVert_{TV}^2.
	\end{align*}
	According to the $(\lambda_1,\lambda_2)$-convexity of $\varphi$, the second term on the left hand side is bounded from above by 
	\begin{align*}
	\max\{ \lambda_1,\lambda_2\} \int |\rho(x)|^2 \, d (P+\pi^*+\pi).
	\end{align*}
	Therefore we can bound the weighted total variation of $\pi,\pi^*$ from above by 
	\begin{align*}
	\frac{1}{4} \lVert \rho ( \pi- \pi^*) \rVert_{TV}^2 \leq  \max\{ \lambda_1,\lambda_2\}  \left( \mathcal{F}_\varphi^{\bs \mu} (\pi)-\mathcal{F}_\varphi^{\bs \mu}(\pi^*) \right) \int |\rho(x)|^2 \, d (P+\pi^*+\pi),
	\end{align*}
	which completes the proof.
\end{proof}

The following theorem provides a quantitative stability of optimizers for a large class of divergence regularizations. Furthermore, in the case of entropic regularization, it recovers  \cite[Theorem 3.11]{eckstein2022quantitative} under weaker moment assumptions. As explained in Section \ref{subsubsec:quantstab}, the reason for this improved moment assumption is that the strong convexity estimate in \cite[Theorem 3.11]{eckstein2022quantitative} makes use of the generalized Pinsker inequality from \cite{bolley2005weighted}, which is applicable to general probability measures, but requires exponential moments. The following result instead uses the strong convexity estimate of Lemma \ref{lem:strongconvexity}, which is more specifically tailored to optimizers of $\OT_{\varphi}(\bs\mu)$.
\begin{theorem}[Quantitative stability of optimizers of $\OT_{\varphi}$]\label{thm:quantstability}
	Assume $\varphi$ is $(\lambda_1, \lambda_2)$-convex and $c$ satisfies \eqref{eq:AL} for marginals $\bs \mu, \tilde{\bs \mu} \in \mathcal{P}_p(X)$. Let $\pi^*, \tilde{\pi}^*$ be optimizers of $\OT_{\varphi}(\bs \mu)$, $\OT_{\varphi}(\tilde{\bs \mu})$, respectively, let $q \in [1, p]$ and
	$
	\Delta := W_p(\bs \mu, \tilde{\bs \mu}),
	$
	and assume $\mu \in \mathcal{P}_{2q}$.
	Then
	\begin{align*}
	W_q(\pi^*, \tilde{\pi}^*) &\leq N^{(\frac{1}{q}-\frac{1}{p})} \Delta + C \big(L\Delta\big)^{\frac{1}{2q}},
	\end{align*}
	where $C$ is a constant only depending on $\lambda_1,\lambda_2$ and the $2q$-th moment of $\bs \mu$.
	\begin{proof}
		
		Take $\pi \in \Pi(\bs \mu)$ to be a shadow of $\tilde{\pi}^*$. From \cite[Lemma 3.2]{eckstein2022quantitative}, we find
		\begin{align*}
		W_p(\tilde{\pi}^*, \pi)=W_p( \bs \mu, \tilde{\bs \mu}), \quad \cF^{\bs \mu}_\varphi (\pi) \leq \OT_{\varphi}(\tilde{\bs \mu}) + L \Delta. 
		\end{align*}
		Thus, by Lemma \ref{lem:continuityofOT},
		\[
		\cF^{\bs \mu}_\varphi (\pi) - \cF^{\bs \mu}_\varphi (\pi^*) \leq 2L\Delta.
		\]
		Take $\rho(x) = d_X(x_0, x)^q$ in Lemma \ref{lem:strongconvexity}, and use the following inequality from  \cite[Proposition 7.10]{villani2021topics}
		\[
		W_q(\pi^*, \pi)^q \leq 2^{q-1} \| \rho (\pi^* - \pi)\|_{TV},
		\]
		which yields that 
		\begin{align*}
		W_q(\pi^*,\pi)^{2q} \leq \tilde{C}   \left(\mathcal{F}_{\varphi}^{\bs \mu}(\pi) - \mathcal{F}_{\varphi}^{\bs \mu}(\pi^*)\right) \leq \tilde{C} 2 L \Delta,
		\end{align*}
		where $\tilde{C}$ is, in view of Lemma \ref{lem:strongconvexity}, a constant only depending on $q, \lambda_1,\lambda_2$ and the $2q$-th moment of $\bs \mu$. Now, by the triangle inequality, we conclude 
		\begin{align*}
		W_q( \pi^*, \tilde{\pi}^*) \leq W_q(\tilde{\pi}^*, \pi)+W_q(\pi^*, \pi) \leq N^{(\frac{1}{q}-\frac{1}{p})} \Delta+C\big(L\Delta\big)^{\frac{1}{2q}},
		\end{align*}
		where $C = (\tilde{C} 2)^{1/2q}$ only depends on $\lambda_1,\lambda_2$ and the $2q$-th moment of $\bs \mu$.
	\end{proof}
\end{theorem}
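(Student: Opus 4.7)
My plan is to transfer $\tilde\pi^*$ into $\Pi(\bs\mu)$ via a shadow, use the quantitative stability of optimal values (Lemma \ref{lem:continuityofOT}) to control the resulting gap in the functional $\mathcal{F}^{\bs\mu}_\varphi$, then upgrade the strong convexity estimate of Lemma \ref{lem:strongconvexity} from a weighted total variation bound into a $W_q$ bound by a careful choice of weight, and finally conclude via the triangle inequality.

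First, I would take $\pi \in \Pi(\bs\mu)$ to be a shadow of $\tilde\pi^*$ as in Definition \ref{def:shadow}. Invoking the shadow properties from \cite[Lemma 3.2]{eckstein2022quantitative}, one simultaneously obtains $W_p(\pi, \tilde\pi^*) \leq W_p(\bs\mu,\tilde{\bs\mu}) = \Delta$ and the divergence-monotonicity estimate, which combined with \eqref{eq:AL} for the cost part yields $\mathcal{F}^{\bs\mu}_\varphi(\pi) \leq \OT_\varphi(\tilde{\bs\mu}) + L\Delta$. Combining with Lemma \ref{lem:continuityofOT}, which gives $|\OT_\varphi(\bs\mu)-\OT_\varphi(\tilde{\bs\mu})| \leq L\Delta$, I arrive at the functional gap
\[
\mathcal{F}^{\bs\mu}_\varphi(\pi) - \mathcal{F}^{\bs\mu}_\varphi(\pi^*) \leq 2L\Delta.
\]

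Next, I would feed this into Lemma \ref{lem:strongconvexity}. The subtlety is that the raw conclusion there is only a \emph{weighted} total variation estimate, so the weight $\rho$ must be chosen specifically to translate it into a $W_q$ bound. The natural choice is $\rho(x) = d_X(x_0, x)^q$, for then on the one hand $\int \rho^2\, d(P + \pi^* + \pi)$ is controlled by the $2q$-th moments of $\bs\mu$ (since $\pi$ and $\pi^*$ are in $\Pi(\bs\mu)$ and $P$ is the product of its marginals), and on the other hand the inequality $W_q(\pi^*,\pi)^q \leq 2^{q-1}\|\rho(\pi^*-\pi)\|_{TV}$ from \cite[Proposition 7.10]{villani2021topics} applies. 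Putting these together I obtain
\[
W_q(\pi^*, \pi)^{2q} \leq \tilde{C}\, L\Delta,
\]
with $\tilde{C}$ depending only on $q$, $\lambda_1,\lambda_2$ and the $2q$-th moment of $\bs\mu$.

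Finally, I would close with the triangle inequality $W_q(\pi^*,\tilde\pi^*) \leq W_q(\pi^*,\pi) + W_q(\pi,\tilde\pi^*)$; the first term is bounded by $(\tilde{C}L\Delta)^{1/(2q)}$, and for the second I would upgrade the shadow bound $W_p(\pi,\tilde\pi^*) \leq \Delta$ to a $W_q$ bound via the elementary Hölder-type relation $W_q \leq N^{1/q-1/p}W_p$ on the product space equipped with the $\ell^p$ product metric $d_{X,p}$ (applied inside the Wasserstein infimum). The main technical obstacle I anticipate is the engineering of the weight $\rho$: the strong convexity bound only yields total variation, and without the weighted version (which in turn requires the $2q$-th moment assumption on $\bs\mu$ to bound the right-hand side of Lemma \ref{lem:strongconvexity}) one cannot produce a Wasserstein distance estimate at all. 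The Hölder exponent $1/(2q)$ is then forced by the quadratic appearance of $\|\rho(\pi^*-\pi)\|_{TV}$ and the $q$-th power inherent to the weighted-TV to $W_q$ conversion.
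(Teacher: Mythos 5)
Your proposal is correct and follows essentially the same route as the paper's proof: shadow of $\tilde\pi^*$, the $2L\Delta$ functional gap via Lemma \ref{lem:continuityofOT}, the weight $\rho(x)=d_X(x_0,x)^q$ in Lemma \ref{lem:strongconvexity} combined with \cite[Proposition 7.10]{villani2021topics}, and the triangle inequality with the $N^{1/q-1/p}$ conversion from $W_p$ to $W_q$. No gaps to report.
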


%

\section{Intrinsic dimension and regularity of test functions}\label{sec:empiricalconvergence}


We have seen in Section \ref{sec:stability}, respectively Lemma \ref{lem:continuityofOT}, that quantitative stability estimates for DOT can be obtained using Lipschitz properties of the cost functions. However, for sample complexity, the literature on EOT \cite{genevay2019sample,mena2019statistical} has established that additional regularity properties can be exploited, and to an extent the same is true even for unregularized OT \cite{manole2021sharp}. This section provides the specific tools for convergence rates of empirical measures with respect to test functions arising from dual optimal solutions for general DOT problems, allowing us to exploit regularity in the sample complexity of DOT as well in Section \ref{sec:samplecomplexity}. The main result, Theorem \ref{thm:empiricalrate}, may be of interest on its own, as to the best of the authors' knowledge, it is the first result measuring empirical rates of convergence, both with higher order differentiability of test functions, and a concept of intrinsic dimension (see Definition \ref{def:intrinsic}).

We fix $\mu \in \mathcal{P}(\mathbb{R}^d)$ and for i.i.d.~random variables $Z_1, Z_2, \dots$ with $Z_1 \sim \mu$, we denote by $ \h{\mu}^n := \frac{1}{n} \sum_{i=1}^n \delta_{Z_i}$ for $n \in \mathbb{N}$ the (random) empirical measures of $\mu$. 

The following introduces the concept of intrinsic dimension used in this paper. Roughly speaking, the measure $\mu$ has intrinsic dimension $d_\mu$, if most of its support can be covered as efficiently as a $d_\mu$-dimensional unit cube. The precise meaning of "most" in this context depends on the regularity $s$ of the test functions that we use. The general concept is adapted from \cite{dudley1969speed}, and we discuss the relation in more detail in Remark \ref{rem:discussionintrinsic} below.

%
%
%
%

\begin{definition}[Intrinsic dimension]\label{def:intrinsic}
	Let $\mu \in \mathcal{P}(\mathbb{R}^d)$, $s \in \mathbb{N}$, $d_\mu \in (0, \infty)$. We say that $\mu$ satisfies $I(d_\mu, s)$, if there exists $K > 0$ such that for every $\varepsilon > 0$, there exists $\Omega_\varepsilon \subset \mathbb{R}^d$ satisfying 
	$\mu(\Omega_\varepsilon^c) \leq \mathbbm{1}_{\{d_{\mu}>2s\}}\varepsilon^{sd_\mu/(d_\mu-2s)}$
	and a partition $\mathcal{A}_\varepsilon$ of $\Omega_\varepsilon$ s.t.
	\begin{align*}
	D_{\mathcal{A}_\varepsilon} := \sup_{A \in \mathcal{A}_\varepsilon} \sup_{x, y \in A} \|x-y\| \leq \varepsilon ~ \text{and} ~
	|\mathcal{A}_\varepsilon| \leq K \varepsilon^{-d_\mu},
	\end{align*}
	where $|\mathcal{A}_\varepsilon|$ denotes the number of sets in the partition $\mathcal{A}_{\varepsilon}$. 
\end{definition}

\begin{remark}\label{rmk:dim}
Note that our definition of intrinsic dimension separates into two cases. We require $\mu(\Omega_\varepsilon^c) \leq \varepsilon^{sd_\mu/(d_\mu-2s)}$ if $d_{\mu} >2s$, and $\mu(\Omega_\varepsilon^c) =0$ if $d_{\mu} \leq 2s$. It can be seen that condition $I(d_{\mu},s)$ is stronger than $I(d_{\mu},s')$ for any $s>s'$. So if a measure $\mu$ satisfies $I(d_{\mu},s)$, it satisfies $I(d_{\mu},s')$ for all $s' <s$.
\end{remark}

\begin{remark}\label{rem:discussionintrinsic}
The notion of intrinsic dimension of Definition \ref{def:intrinsic} is closely connected to different versions of dimension in the literature. First, the definition is inspired by \cite{dudley1969speed}, and the notion of intrinsic dimension therein corresponds to the one we use for $s = 1, d_\mu > 2$. Interestingly, basically the same concept of dimension as in Definition \ref{def:intrinsic} was already used by \cite[$d_p^*$ in Definition 4]{weed2019sharp}. Among others, it is shown in \cite{weed2019sharp} that if the support of $\mu$ has Minkowski-dimension $d_M$ (also called entropic or box dimension), then $\mu$ satisfies $I(d_\mu, s)$. Further relations to Hausdorff and quantization dimension are given in \cite[Chapter 4]{weed2019sharp} and \cite[Chapter 11]{graf2007foundations}.

While the goal in \cite{weed2019sharp} was to generalize the work by \cite{dudley1969speed} from $W_1$ to $W_p$, our goal is to generalize Lipschitz test functions to higher order regularity. It is not surprising that both questions use the same generalization of the notion of dimension compared to the work in \cite{dudley1969speed}. Indeed, a larger $s > 0$ means a faster decay of the "tail-part" $\Omega_\varepsilon^C$ in Definition \ref{def:intrinsic}. In our work, the "non-tails" (i.e., $\Omega_\varepsilon$) converge with a faster speed since test functions are more regular. This means the assumption of larger $s$ is used for the "tail-part" (i.e., $\Omega_\varepsilon^C$) to keep up with the faster speed of the non-tail part. Similarly, since \cite{weed2019sharp} deals with $W_p$, the "tail-part" needs to be controlled with respect to $p$-th moments and not just first moments as in \cite{dudley1969speed}, again motivating the same notion of intrinsic dimension.
\end{remark}

The following is the main result of this section.
\begin{theorem}\label{thm:empiricalrate}
	Assume $\mu$ satisfies $I(d_\mu, s)$ for some $s \in \mathbb{N}$, $d_\mu \in (0,\infty)$. Then
	\begin{align*}
	\mathbb{E}\left[\sup_{f \in C_{1}^{s-1,1}(\mathbb{R}^d)} \int f \,d(\mu - \h \mu^n) \right] \lesssim
	\begin{cases}
	 n^{-s/d_\mu} \quad  &\text{if $d_{\mu} >2s$}, \\
	 \log(n) \, n^{-1/2} \quad &\text{if $d_{\mu}=2s$ }, \\
	 n^{-1/2} \quad  &\text{if $d_{\mu}<2s$}.
	\end{cases}
	\end{align*}
\end{theorem}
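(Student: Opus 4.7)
The plan is to combine a Taylor expansion argument with the chaining method, exploiting both the higher-order smoothness encoded in $C_1^{s-1,1}$ and the covering-number bound provided by $I(d_\mu,s)$. The target rate will emerge from balancing three sources of error: a variance term (controlled by $|\mathcal{A}_\varepsilon|$), a bias term of order $\varepsilon^s$ (the Taylor remainder), and the tail probability $\mu(\Omega_\varepsilon^c)$.

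First, I would fix dyadic scales $\varepsilon_k:=2^{-k}$ and take $(\Omega_k,\mathcal{A}_k)$ from Definition~\ref{def:intrinsic} at scale $\varepsilon_k$, selecting a center $x_A\in A$ for each $A\in\mathcal{A}_k$; after a routine reduction one may assume $\Omega_0\supseteq \Omega_1\supseteq\cdots$ by intersecting if needed. For $f\in C_1^{s-1,1}(\mathbb{R}^d)$, define the piecewise Taylor polynomial approximation
\[
T_k f(x) := \sum_{|\alpha|\le s-1}\frac{D^\alpha f(x_A)}{\alpha!}(x-x_A)^\alpha \quad \text{for } x\in A\in\mathcal{A}_k,
\]
extended by zero outside $\Omega_k$. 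Since $D^\alpha f$ is $1$-Lipschitz for $|\alpha|=s-1$, the standard Taylor remainder estimate yields $|f-T_k f|\le C\varepsilon_k^s$ on $\Omega_k$ uniformly in $f$, with $C$ depending only on $s,d$. Fix a cutoff $K$ to be optimized and decompose
\[
\int f\,d(\mu-\hat\mu^n) = \int_{\Omega_K^c}\! f\,d(\mu-\hat\mu^n) + \int_{\Omega_K}(f-T_K f)\,d(\mu-\hat\mu^n) + \int T_K f\,d(\mu-\hat\mu^n),
\]
telescoping the last piece as $\int T_0f\,d(\mu-\hat\mu^n)+\sum_{k=0}^{K-1}\int(T_{k+1}f-T_kf)\,d(\mu-\hat\mu^n)$. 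Using $\|f\|_\infty\le 1$ and the Taylor estimate, the expected suprema of the first two terms are at most $2\mu(\Omega_K^c)$ and $2C\varepsilon_K^s$, while the $T_0$-term contributes $O(n^{-1/2})$ via a standard finite-dimensional empirical-process bound since $|\mathcal{A}_0|$ is a constant.

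The heart of the proof is bounding each chained difference. On any cell $B\in\mathcal{A}_{k+1}$, $T_{k+1}f-T_k f$ is a polynomial of degree $\le s-1$, and adding and subtracting $f$ gives $\|T_{k+1}f-T_k f\|_{L^\infty(B)}\le 2C\varepsilon_k^s$. The class of such polynomial pieces on $B$ lies in a fixed finite-dimensional ball of $L^\infty$-radius $\lesssim\varepsilon_k^s$, and a standard empirical-process bound for bounded finite-dimensional classes (for instance via Rademacher complexity, or by using a Markov-type inequality for polynomials to control individual coefficients by $\varepsilon_{k+1}^{-|\alpha|}\varepsilon_k^s$ and then bounding each resulting monomial integral by its variance) yields
\[
\mathbb{E}\biggl[\sup_f\Bigl|\int_B(T_{k+1}f-T_k f)\,d(\mu-\hat\mu^n)\Bigr|\biggr] \lesssim \varepsilon_k^s\sqrt{\mu(B)/n}.
\]
Pushing the supremum inside the sum over $B$ and applying Cauchy--Schwarz with $|\mathcal{A}_{k+1}|\lesssim \varepsilon_{k+1}^{-d_\mu}$ produces a per-level contribution of order $\varepsilon_k^s\sqrt{|\mathcal{A}_{k+1}|/n}\simeq 2^{k(d_\mu/2-s)}n^{-1/2}$.

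Collecting the estimates, the total bound is $\lesssim\varepsilon_K^s+\mu(\Omega_K^c)+n^{-1/2}\sum_{k=0}^{K-1}2^{k(d_\mu/2-s)}$. For $d_\mu<2s$ the series is summable and the overall rate is $n^{-1/2}$; for $d_\mu=2s$ the sum is $O(K)$ (with the tail identically zero by Definition~\ref{def:intrinsic}), so choosing $K\sim\log n$ yields $\log(n)\,n^{-1/2}$; for $d_\mu>2s$ the sum is dominated by $2^{K(d_\mu/2-s)}n^{-1/2}$, which balanced against $\varepsilon_K^s=2^{-Ks}$ gives $K=\log_2(n)/d_\mu$, $\varepsilon_K=n^{-1/d_\mu}$, and rate $n^{-s/d_\mu}$, while $\mu(\Omega_K^c)\le\varepsilon_K^{sd_\mu/(d_\mu-2s)}=n^{-s/(d_\mu-2s)}$ is negligible since $d_\mu-2s<d_\mu$. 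The main technical obstacle I foresee is verifying the per-cell empirical-process bound with the correct $\sqrt{\mu(B)/n}$ scaling extracted from variance (not from a crude $L^\infty$ estimate), because the polynomial $T_{k+1}f-T_k f$ has coefficients of different orders that scale differently in $\varepsilon_k$; a careful finite-dimensional covering of the polynomial class on each cell combined with Bernstein-type inequalities should deliver this, but the simultaneous bookkeeping over all cells and polynomial degrees is the delicate step.
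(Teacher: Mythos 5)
Your outline follows the same chaining-plus-Taylor-expansion strategy as the paper's proof (which proceeds via Lemmas~\ref{lem:decomposition}--\ref{lem:induction_s}), and the regime split and the final balancing are in the right neighborhood; but there are genuine gaps.

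The most important one concerns the coarsest chaining scale in the regime $d_\mu>2s$. You start the telescope at $\varepsilon_0=1$, but then you cannot control the resulting tail. If you extend each $T_kf$ by zero off $\Omega_k$, the increment $T_{k+1}f-T_kf$ is \emph{not} of size $\varepsilon_k^s$ on the annuli $\Omega_k\setminus\Omega_{k+1}$ (there it is of order $1$), and $\sum_k\mu(\Omega_k\setminus\Omega_{k+1})$ is a constant, not a rate. Equivalently, working on $\bar\Omega=\bigcap_k\Omega_k$ costs $\mu(\bar\Omega^c)\lesssim\varepsilon_0^{sd_\mu/(d_\mu-2s)}=1$. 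The paper resolves this by letting the \emph{coarsest} scale also tend to zero: with finest scale $\varepsilon=n^{-1/d_\mu}$, the coarsest scale is taken as $\varepsilon^{1-2s/d_\mu}$, which makes $\mu(\bar\Omega^c)\lesssim\varepsilon^s=n^{-s/d_\mu}$ and simultaneously makes $\sqrt{|\mathcal{A}_T|/n}\lesssim n^{-s/d_\mu}$ at the coarsest level. Your claim that the $T_0$-term is $O(n^{-1/2})$ "since $|\mathcal{A}_0|$ is a constant" must therefore be replaced by a genuine balancing of the base term, and your observation that $\mu(\Omega_K^c)\le n^{-s/(d_\mu-2s)}$ is "negligible" is a symptom of the same issue: in the paper the tail term is not negligible but of exactly the main order $n^{-s/d_\mu}$.

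Second, the per-cell bound $\mathbb{E}\bigl[\sup_f\bigl|\int_B(T_{k+1}f-T_kf)\,d(\mu-\hat\mu^n)\bigr|\bigr]\lesssim\varepsilon_k^s\sqrt{\mu(B)/n}$ is not established by your sketch. Markov inverse inequalities for polynomials need a regular domain, and the cells from Definition~\ref{def:intrinsic} are constrained only in diameter, so they can be arbitrarily shaped; the route you propose does not apply. Two fixes exist. One is to bound the coefficient of $(x-x_B)^\beta$ in $T_{k+1}f-T_kf$ directly: it equals $\frac{1}{\beta!}$ times the Taylor remainder of $D^\beta f$ expanded around the coarse center $x_A$ and evaluated at $x_B$, hence is $\lesssim\varepsilon_k^{s-|\beta|}$ because $D^\beta f\in C_1^{s-1-|\beta|,1}$; no inverse inequality is needed. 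The other, which is what the paper actually does, is to never form the polynomial at all: the Taylor expansion is carried inside the integral via $f(x_{A_i})=f(x_{A_{i+1}})+\sum_\alpha\frac{D^\alpha f(x_{A_{i+1}})}{\alpha!}(x_{A_i}-x_{A_{i+1}})^\alpha+R^f$, so the supremum over $f$ only ever acts on scalars $|f(x_A)|\le 1$ or $|D^\alpha f(x_A)|\le 1$, and the stochastic estimate reduces to deterministic Cauchy--Schwarz for fixed functions (Lemma~\ref{lem:decomposition}), organized as a double induction over chaining level and smoothness degree (Lemmas~\ref{lemma:estimateE} and~\ref{lem:induction_s}). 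Finally, whichever version you use, the telescoping requires the partitions to be nested refinements so that $T_kf$ restricted to a fine cell is a single polynomial; Definition~\ref{def:intrinsic} does not give this, and the construction that does (Lemma~\ref{lem:refinement}) degrades the diameter bound multiplicatively, which is why the paper works with scale ratio $3$ between consecutive levels. These pieces should be made explicit before the argument is a proof.
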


\begin{remark}
Given any set of test functions $\mathcal{F}$, the so-called integral probability metric (IPM) is defined as $d_{\mathcal{F}}(\mu,\nu):= \sup \left\{\left|\int f \, d(\mu-\nu) \right|: \, f \in \mathcal{F} \right\}$, cf.~\cite{muller1997integral}. Choosing different class of test functions, various popular distances, including Wasserstein-$1$ metric, total variation distance, and Fourier-Wasserstein distance, can be obtained.  As shown in \cite{10.1214/12-EJS722}, IPM is also closely related to the problem of binary classification. For any $\mathcal{F}\subseteq C_1^{s-1,1}(\mathbb{R}^d)$, together with triangle inequality, our theorem provides a convergence rate of $d_{\mathcal{F}}(\hat{\mu}_n,\hat{\nu}_n) \to d_{\mathcal{F}}(\mu,\nu)$ in expectation.  


\end{remark}

The proof will be given at the end of the section, and we first state some preliminary results that will be used. The idea is based on the proof of \cite[Theorem 3.2]{dudley1969speed}, which is to use a refinement of partitions of a suitable form, c.f.~Lemma \ref{lem:refinement}. While \cite[Theorem 3.2]{dudley1969speed} uses Lipschitz continuity to pass from one partition to the next coarser one, we will use a Taylor expansion to this end. Lemma \ref{lem:decomposition} gives the basic result for how we deal with the sampling error. Lemmas \ref{lemma:estimateE} and \ref{lem:induction_s} will inductively control the errors when passing from one partition to the next coarser one, while the proof collects the estimate for the finest partition and suitably balances the occurring parameters. As noticed in Remark~\ref{rmk:dim}, $\mu$ satisfying $I(d_{\mu},s)$ implies the condition $I(d_{\mu},s')$ for any $s'<s$. Therefore in the case of $d_{\mu}<2s$, it is sufficient to prove the result for  $d_{\mu} \in (2s-2,2s)$.

\begin{lemma}\label{lem:decomposition} 
	Let $g: \Omega \rightarrow \mathbb{R}$ be measurable and bounded and $\mathcal{A}$ be a partition such that $\cup_{A \in \cA} A \subset \Omega$. Then
	\[
	\mathbb{E}\left[ \sum_{A \in \mathcal{A}} \left| \int_A g \,d(\mu - \h \mu^n) \right|\right] \leq \|g \|_\infty \left( \frac{|\mathcal{A}|}{n} \right)^{1/2}. 
	\]
	\begin{proof}
		Recall that $\h \mu^n =\frac{\sum_{i=1}^n \delta_{Z_i}}{n}$, and hence $\int_A g \, d \h \mu^n=\frac{\sum_{i=1}^n g_A(Z_i)}{n}$, where $g_A(x):=g(x) \mathbbm{1}_A(x)$ and $\{ g_A(Z_i)\}_{i=1,\dotso,n}$ are \emph{i.i.d.} with expectation $\int_A g \, d\mu$. Therefore we have $$\mathbb{E} \left[ \left|\int_A g \, d(\mu-\h \mu^n) \right|^2 \right]=\frac{\text{Var}(g_A(Z_1))}{n} \leq \frac{1}{n} \int_A g^2 \, d\mu. $$ Now by Cauchy-Schwartz or Jensen's inequality, we obtain that 
		\begin{align*}
		\mathbb{E}\left[ \sum_{A \in \mathcal{A}} \left| \int_A g \,d(\mu - \h \mu^n) \right|\right] & \leq \sqrt{|\cA|} \,\sqrt{ \mathbb{E}\left[\sum_{A \in \cA} \left| \int_A g \, d(\mu-\h \mu^n)\right|^2 \right]} \\
		& \leq \sqrt{ \frac{|\mathcal{A}|}{n}   \int g^2 \, d \mu } \leq \|g\|_\infty \left( \frac{|\mathcal{A}|}{n} \right)^{1/2}. 
		\end{align*}
	\end{proof}
\end{lemma}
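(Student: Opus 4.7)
The plan is entirely second-moment based, exploiting the i.i.d.\ structure of the samples on each cell of the partition separately, then recombining across the partition via Cauchy--Schwarz.

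First, I would rewrite each summand as a centered empirical average. For $A \in \cA$ set $g_A := g\mathbbm{1}_A$; since $\h \mu^n = \frac{1}{n}\sum_{i=1}^n \delta_{Z_i}$, one has $\int_A g \, d\h \mu^n = \frac{1}{n}\sum_{i=1}^n g_A(Z_i)$, and $\{g_A(Z_i)\}_{i=1}^n$ are i.i.d.\ bounded random variables with common mean $\int_A g \, d\mu$. Thus $X_A := \int_A g \, d(\mu - \h\mu^n)$ is a sample mean of centered i.i.d.\ variables, and independence immediately gives
\[
\mathbb{E}[X_A^2] \;=\; \frac{\mathrm{Var}(g_A(Z_1))}{n} \;\leq\; \frac{1}{n}\int g_A^2 \, d\mu \;=\; \frac{1}{n}\int_A g^2 \, d\mu.
\]

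Second, I would move from the $\ell^1$-sum of $|X_A|$'s that appears in the statement to the $\ell^2$-sum for which the bound above is tailored. A single application of Cauchy--Schwarz over the $|\cA|$ indices gives
\[
\sum_{A \in \cA} |X_A| \;\leq\; \sqrt{|\cA|}\,\Bigl(\sum_{A \in \cA} X_A^2\Bigr)^{1/2}.
\]
Taking expectation and using Jensen's inequality to pull $\mathbb{E}$ inside the square root, then summing the per-cell variance bound, yields
\[
\mathbb{E}\Bigl[\sum_{A \in \cA} |X_A|\Bigr] \;\leq\; \sqrt{|\cA|}\,\sqrt{\sum_{A \in \cA}\mathbb{E}[X_A^2]} \;\leq\; \sqrt{\tfrac{|\cA|}{n}}\,\sqrt{\sum_{A \in \cA}\int_A g^2 \, d\mu}.
\]

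Third, I would collect the per-cell integrals using disjointness of the partition together with $\cup_{A \in \cA} A \subset \Omega$: $\sum_{A \in \cA}\int_A g^2 \, d\mu = \int_{\cup_A A} g^2 \, d\mu \leq \int g^2 \, d\mu \leq \|g\|_\infty^2$. Substituting gives the claimed bound $\|g\|_\infty \sqrt{|\cA|/n}$.

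There is no real obstacle here; the argument is the standard variance bound plus one layer of Cauchy--Schwarz. The only point worth flagging is the order of operations in step two, where Jensen is used to commute expectation with the square root before summing the per-cell second-moment estimates.
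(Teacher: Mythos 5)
Your argument is correct and coincides with the paper's own proof: the per-cell variance bound $\mathbb{E}[X_A^2]\leq \frac{1}{n}\int_A g^2\,d\mu$ from the i.i.d.\ structure, followed by Cauchy--Schwarz over the cells together with Jensen, and finally the disjointness of the partition to collect $\sum_A \int_A g^2\,d\mu \leq \|g\|_\infty^2$. There is nothing to add.
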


In the following, we will show that if we choose partitions $\mathcal{A}_T, \mathcal{A}_{T-1}, \dots, \mathcal{A}_1$ for different levels of $\varepsilon$, namely, $\varepsilon_t = 3^t \varepsilon$, then the partitions may without loss of generality be taken as refinements of each other. In the remainder of this section, we always consider $\mathcal{A}_T, \dots, \mathcal{A}_1$ to be as in the following lemma. 
\begin{lemma}[Partitions can be chosen as refinements]\label{lem:refinement}
	Assume $\mu$ satisfies $I(d_\mu, s)$ for some $s \in \mathbb{N}$, $d_\mu \in (0, \infty)$. Then there exists a constant $K > 0$ such that for all $T \in \mathbb{N}$ and $\varepsilon > 0$ with $\varepsilon 3^T \leq 1$, there exists $\bar{\Omega} \subset \mathbb{R}^d$ and refinements of partitions $\mathcal{A}_T, \mathcal{A}_{T-1}, \dots, \mathcal{A}_1$ of $\bar{\Omega}$ s.t.
	\[
	|\mathcal{A}_t| \leq K (\varepsilon 3^t)^{-d_\mu},~ |D_{\mathcal{A}_t}| \leq 3^t \varepsilon \text{ and } \mu(\bar{\Omega}^c) \leq \mathbbm{1}_{\{d_{\mu}>2s\}}K (3^T \varepsilon)^{sd_\mu / (d_\mu - 2s)}.
	\]
	\begin{proof}
		Aside from the last part of the claim, this result is given in \cite[Proof of Theorem 3.2]{dudley1969speed}. We shortly recap the construction, and prove the claim for $d_{\mu}>2s$ since the case of $d_{\mu}\leq 2s$ is trivial. In what follows, the constant $K$ may change from line to line. From Definition \ref{def:intrinsic} (using $\tilde{K}$ instead of $K$ therein), we take a partition $\tilde{\mathcal{A}}_t$ of $\Omega_t$ satisfying $|D_{\tilde{\mathcal{A}_t}}| \leq 3^{t-1} \varepsilon$, $|\tilde{\mathcal{A}}_t| \leq \tilde{K} (\varepsilon 3^{t-1})^{-d_\mu} \leq K (\varepsilon 3^t)^{-d_\mu}$, and $\mu(\Omega_t^c) \leq (\varepsilon 3^{t-1})^{sd_{\mu}/(d_\mu-2s)}$. Taking $\bar{\Omega} := \cap_{t=1}^T \Omega_t$ satisfies the desired inequality with $K=\frac{1}{3^{sd_\mu/(d_\mu-2s)} - 1}$ by the geometric series, since
		\[
		\mu(\bar{\Omega}^c) \leq \sum_{t=0}^{T-1} (\varepsilon 3^t)^{sd_\mu/(d_\mu-2s)} \leq K(3^T \varepsilon)^{{sd_\mu/(d_\mu-2s)}}.
		\]
		In what follows, consider all sets to be intersected with $\bar{\Omega}$ (and if this leads to an empty set, disregard it).
		We take $\mathcal{A}_1 := \tilde{\mathcal{A}}_1$. Using the notation $\tilde{\mathcal{A}}_t = \{\tilde{A}_{t, 1}, \dots, \tilde{A}_{t, n_t}\}$, we proceed inductively to define $\mathcal{A}_t = \{A_{t, 1}, \dots, A_{t, n_t}\}$ as follows: 
		\begin{align*}
		A_{t, 1} &:= \cup\{A_{t-1, j} : A_{t-1, j} \cap \tilde{A}_{t, 1} \neq \emptyset, j=1, \dots, n_{t-1}\}, \\
		A_{t, i} &:= \cup\{A_{t-1, j} : A_{t-1, j} \cap \tilde{A}_{t, i} \neq \emptyset \text{ and } A_{t-1, j} \cap A_{t, k} = \emptyset\\
		&\hspace{2.3cm} \text{ for all } k=1, \dots, i-1, j=1, \dots, n_{t-1}\},
		\end{align*}
		for $i=1, \dots, n_{t}$, $t=2, \dots, T$. The refinement property is clear by construction, and further $D_{\mathcal{A}_1} = D_{\tilde{\mathcal{A}}_1}$ and $D_{\mathcal{A}_t} \leq 2 D_{\mathcal{A}_{t-1}} + D_{\tilde{\mathcal{A}}_t}$, which inductively yields the claim.
	\end{proof}
\end{lemma}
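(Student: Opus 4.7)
The strategy is to apply the intrinsic dimension hypothesis $I(d_\mu,s)$ at $T$ geometrically decreasing scales and then splice the resulting partitions together so that each refines the next. Concretely, for each $t\in\{1,\dots,T\}$, I would set $\varepsilon_t:=3^{t-1}\varepsilon$ and invoke Definition \ref{def:intrinsic} (with its constant $\tilde K$) to obtain $\Omega_t\subset\mathbb{R}^d$ together with a partition $\tilde{\mathcal{A}}_t$ of $\Omega_t$ satisfying $D_{\tilde{\mathcal{A}}_t}\leq\varepsilon_t$, $|\tilde{\mathcal{A}}_t|\leq\tilde K\varepsilon_t^{-d_\mu}$, and $\mu(\Omega_t^c)\leq\mathbbm{1}_{\{d_\mu>2s\}}\varepsilon_t^{sd_\mu/(d_\mu-2s)}$. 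The candidate common domain is $\bar\Omega:=\bigcap_{t=1}^T\Omega_t$.

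For the mass estimate, when $d_\mu\leq 2s$ the sets $\Omega_t$ are $\mu$-full, so $\mu(\bar\Omega^c)=0$ and the bound is trivial. When $d_\mu>2s$, put $\alpha:=sd_\mu/(d_\mu-2s)>0$; a union bound plus a geometric-series computation with ratio $3^\alpha>1$ (whose $t=T$ term dominates the sum up to the factor $1/(1-3^{-\alpha})$) yields $\mu(\bar\Omega^c)\leq\sum_{t=1}^T(3^{t-1}\varepsilon)^\alpha\leq K(3^T\varepsilon)^\alpha$ after absorbing the constant into $K$.

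Next, restrict each cell of every $\tilde{\mathcal{A}}_t$ to $\bar\Omega$ and discard any empty pieces; this can only decrease diameters and cardinalities, so the bounds above are preserved. I would then build the refining partitions of $\bar\Omega$ inductively: set $\mathcal{A}_1:=\tilde{\mathcal{A}}_1$, and given $\mathcal{A}_{t-1}$, enumerate $\tilde{\mathcal{A}}_t=\{\tilde A_{t,1},\dots,\tilde A_{t,n_t}\}$ and greedily define $A_{t,i}$ as the union of those cells of $\mathcal{A}_{t-1}$ which meet $\tilde A_{t,i}$ but have not been absorbed into some $A_{t,k}$ with $k<i$. Since every cell of $\mathcal{A}_{t-1}$ meets at least one $\tilde A_{t,i}$ (both cover $\bar\Omega$), this produces a partition $\mathcal{A}_t$ of $\bar\Omega$ that refines $\mathcal{A}_{t+1}$ automatically, with $|\mathcal{A}_t|\leq|\tilde{\mathcal{A}}_t|\leq\tilde K 3^{d_\mu}(3^t\varepsilon)^{-d_\mu}$.

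The main technical step is the diameter bound $D_{\mathcal{A}_t}\leq 3^t\varepsilon$, proved by induction on $t$. The base case $D_{\mathcal{A}_1}=D_{\tilde{\mathcal{A}}_1}\leq\varepsilon\leq 3\varepsilon$ is immediate. For the inductive step, any two points $x,y\in A_{t,i}$ lie in cells $B,B'\in\mathcal{A}_{t-1}$ which each intersect $\tilde A_{t,i}$; choosing witnesses $z\in B\cap\tilde A_{t,i}$ and $z'\in B'\cap\tilde A_{t,i}$ and using the triangle inequality gives $\|x-y\|\leq\|x-z\|+\|z-z'\|+\|z'-y\|\leq 2D_{\mathcal{A}_{t-1}}+D_{\tilde{\mathcal{A}}_t}\leq 2\cdot 3^{t-1}\varepsilon+3^{t-1}\varepsilon=3^t\varepsilon$. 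The crux, and the one step where the argument could fail if done naively, is that the greedy construction genuinely forces each added cell of $\mathcal{A}_{t-1}$ to touch $\tilde A_{t,i}$; without this touching property one could not pay for the inflation of cells with only a single $D_{\tilde{\mathcal{A}}_t}$ term on top of $2D_{\mathcal{A}_{t-1}}$. Combining the three bounds and absorbing all numerical factors into one constant $K$ completes the lemma.
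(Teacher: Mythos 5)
Your proposal is correct and follows essentially the same route as the paper's proof: invoking $I(d_\mu,s)$ at the scales $3^{t-1}\varepsilon$, intersecting the exceptional sets and summing a geometric series for the mass bound, and then greedily merging cells of $\mathcal{A}_{t-1}$ along the cells of $\tilde{\mathcal{A}}_t$ so that the diameter recursion $D_{\mathcal{A}_t}\leq 2D_{\mathcal{A}_{t-1}}+D_{\tilde{\mathcal{A}}_t}$ closes to $3^t\varepsilon$. You merely make explicit the triangle-inequality step behind that recursion, which the paper leaves implicit.
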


For the proof of the main theorem, we require the following notation.
For $\tilde{s} \in \{1, \dots, s\}$,  $g \in L^\infty$, and $i \in \{1, \dots, T\}$, define
\[
E_{\tilde{s}, g, i} := \mathbb{E}\left[\sup_{f \in C^{\tilde{s}-1,1}_1}\Big|\sum_{A_i \in \mathcal{A}_i} \int_{A_i} f(x_{A_i}) g(x) \,(\mu - \h \mu^n)(dx) \Big|\right],
\]
where $x_{A_i}$ are some fixed elements in $A_i$. We further define
\[
M_{i, g} := \mathbb{E}\left[\sum_{A_i \in \mathcal{A}_i} \left| \int_{A_i} g \,d(\mu-\h\mu^n)\right|\right].
\]
By $g_{Z}$ for $Z>0$ we denote some generic function in $L^\infty$ satisfying $\|g_Z\|_\infty \leq Z \|g \|_\infty$, and we abbreviate $D_t := D_{\mathcal{A}_t}$

\begin{lemma}\label{lemma:estimateE}
	For any ${\tilde s} \in \{1,\dotso,s\}$, $ g \in L^\infty$ and $ i \in \{1,\dotso, T-1\}$, we have that 
	\begin{align*}
	E_{ {\tilde s}, g, i}  \leq E_{{\tilde s}, g, T} + \sum_{j=i}^{T-1} \left[ \sum_{1 \leq |\alpha| \leq {\tilde s}-1} \frac{1}{\alpha!} E_{{\tilde s}-|\alpha|, g_{D_{j+1}^{|\alpha|}}, j+1} + \sum_{|\alpha|=\tilde{s}} \frac{1}{\alpha!}D_{j+1}^{\tilde s} M_{j, g}. \right]
	\end{align*}
\end{lemma}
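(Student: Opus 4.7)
The plan is to establish a single-step inequality
\begin{align*}
E_{\tilde{s},g,j} \leq E_{\tilde{s},g,j+1} + \sum_{1\leq |\alpha|\leq \tilde{s}-1} \frac{1}{\alpha!} E_{\tilde{s}-|\alpha|, g_{D_{j+1}^{|\alpha|}}, j+1} + \sum_{|\alpha|=\tilde{s}} \frac{1}{\alpha!} D_{j+1}^{\tilde{s}} M_{j,g}
\end{align*}
for each $j \in \{i,\ldots,T-1\}$, and then telescope it from $j=i$ up to $j=T-1$, leaving $E_{\tilde{s},g,T}$ as the only residual term. The mechanism behind the one-step inequality is a Taylor expansion of $f$ of order $\tilde{s}-1$ around a coarser centre, combined with the refinement property from Lemma \ref{lem:refinement}: each $A_j \in \mathcal{A}_j$ is contained in a unique $A_{j+1} \in \mathcal{A}_{j+1}$, and hence $|x_{A_j}-x_{A_{j+1}}| \leq D_{j+1}$.

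For fixed $f \in C^{\tilde{s}-1,1}_1$, I would apply the multivariate Taylor formula with integral remainder (\cite{folland2005higher}) to $f(x_{A_j})$ around $x_{A_{j+1}}$, giving
\begin{align*}
f(x_{A_j}) = f(x_{A_{j+1}}) + \sum_{1 \leq |\alpha| \leq \tilde{s}-1} \frac{D^\alpha f(x_{A_{j+1}})}{\alpha!}(x_{A_j}-x_{A_{j+1}})^\alpha + R_j(A_j),
\end{align*}
where the remainder satisfies $|R_j(A_j)| \leq \sum_{|\alpha|=\tilde{s}} \frac{1}{\alpha!}D_{j+1}^{\tilde{s}}$; this uses that $D^\alpha f$ exists a.e.\ and is bounded by $1$ for $|\alpha|=\tilde{s}$ (by the Lipschitz assumption on $D^{\tilde{s}-1}f$ together with Rademacher's theorem) and $\int_0^1(1-t)^{\tilde{s}-1}dt = 1/\tilde{s}$. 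Substituting this into $\sum_{A_j \in \mathcal{A}_j} \int_{A_j} f(x_{A_j})\,g\,d(\mu-\h\mu^n)$ produces three pieces. The zeroth-order piece collapses via $\sum_{A_j \subset A_{j+1}} \int_{A_j} g\,d(\mu-\h\mu^n) = \int_{A_{j+1}} g\,d(\mu-\h\mu^n)$ to $\sum_{A_{j+1}} f(x_{A_{j+1}}) \int_{A_{j+1}} g\,d(\mu-\h\mu^n)$, contributing exactly $E_{\tilde{s},g,j+1}$ after taking $\sup_f$ and expectation. The remainder piece is constant on each $A_j$, so its absolute value is bounded by $\sum_{|\alpha|=\tilde{s}} \frac{1}{\alpha!} D_{j+1}^{\tilde{s}} \sum_{A_j} |\int_{A_j} g\,d(\mu-\h\mu^n)|$, whose expectation is $\sum_{|\alpha|=\tilde{s}} \frac{1}{\alpha!} D_{j+1}^{\tilde{s}} M_{j,g}$.

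The interesting middle piece is where the key structural observation enters. For each $1 \leq |\alpha| \leq \tilde{s}-1$, define $\tilde g_\alpha(x) := (x_{A_j(x)} - x_{A_{j+1}(x)})^\alpha g(x)$, which is a piecewise factor times $g$ and satisfies $\|\tilde g_\alpha\|_\infty \leq D_{j+1}^{|\alpha|}\|g\|_\infty$, fitting the template $g_{D_{j+1}^{|\alpha|}}$. Grouping by $A_{j+1}$, the contribution reads $\sum_{A_{j+1}} D^\alpha f(x_{A_{j+1}}) \int_{A_{j+1}} \tilde g_\alpha \,d(\mu-\h\mu^n)$. Because $f \in C^{\tilde{s}-1,1}_1$, the derivative $D^\alpha f$ lies in $C^{\tilde{s}-|\alpha|-1,1}_1$: indeed $\|D^{\alpha+\beta}f\|_\infty \leq 1$ for $|\beta| \leq \tilde{s}-|\alpha|-1$ and $D^{\alpha+\beta} f$ is $1$-Lipschitz when $|\beta|=\tilde{s}-|\alpha|-1$. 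Therefore $\sup_f$ of this contribution is dominated by $\sup$ over the smaller class, producing $E_{\tilde{s}-|\alpha|, g_{D_{j+1}^{|\alpha|}}, j+1}$ in expectation. Assembling the three bounds gives the one-step inequality, and iterating from $j=i$ to $j=T-1$ yields the claim. The main subtle point is precisely this identification of the middle terms as lower-order $E$'s; without the structural fit $D^\alpha C^{\tilde{s}-1,1}_1 \subset C^{\tilde{s}-|\alpha|-1,1}_1$, the chaining would not recurse on the same family of functionals, and the indexing of $\tilde s - |\alpha|$ in the statement would not be self-consistent.
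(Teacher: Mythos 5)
Your proposal is correct and follows essentially the same route as the paper: a one-step Taylor expansion of $f(x_{A_j})$ around the coarser centre $x_{A_{j+1}}$, identification of the intermediate terms as $E_{\tilde s-|\alpha|, g_{D_{j+1}^{|\alpha|}}, j+1}$ via $D^\alpha f \in C^{\tilde s - |\alpha|-1,1}_1$, the $M_{j,g}$ bound on the remainder, and telescoping. The only cosmetic difference is that you bound the remainder via the order-$\tilde s$ integral form with a.e.\ derivatives (Rademacher), whereas the paper writes it at order $\tilde s-1$ and invokes the Lipschitz property of $D^\alpha f$ for $|\alpha|=\tilde s-1$ directly; both give the same estimate.
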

\begin{proof}
	For any $A_i \in \cA_i$, since $\cA_i$ is a refinement of $\cA_{i+1}$ there exists a unique $A_{i+1} \in \cA_{i+1}$ such that $A_i \subset A_{i+1}$. Then, for arbitrary $f \in C^{{\tilde s}-1,1}_1$, by the Taylor expansion in multi-index notation (following, e.g., \cite{folland2005higher}) $$f(x_{A_i})= f(x_{A_{i+1}})+\sum_{1 \leq |\alpha| \leq {\tilde s}-1} \frac{D^{\alpha}f(x_{A_{i+1}})}{\alpha!}(x_{A_i}-x_{A_{i+1}})^{\alpha}+R^f(x_{A_i}, x_{A_{i+1}}),$$ 
	where 
	\[
	R^f(x_{A_i}, x_{A_{i+1}}) = \sum_{|\alpha|=\tilde{s}-1} \frac{1}{\alpha!} \Big( D^\alpha f(\xi_{A_i, A_i+1}) - D^\alpha f(x_{A_{i+1}})\Big)(x_{A_i} - x_{A_{i+1}})^\alpha
	\]
	for $\xi_{A_i, A_i+1} = x_{A_{i+1}} + c_{A_i, A_{i+1}} (x_{A_{i}} - x_{A_{i+1}})$ for some $c_{A_i, A_{i+1}} \in (0, 1)$. And thus, by 1-Lipschitz continuity of $D^\alpha f$ for $|\alpha| = {\tilde{s}}-1$, we obtain
	\[
	|R^f(x_{A_i}, x_{A_{i+1}})| \leq \sum_{|\alpha|=\tilde{s}-1} \frac{1}{\alpha!} D_{i+1}^{\tilde{s}}.
	\]
	Define functions $g^{\alpha}$ for each index $\alpha$ via $$g^{\alpha}(x):= \sum_{A_i \in \cA_i}\mathbbm{1}_{\{x \in A_i\}} g(x) (x_{A_i}-x_{A_{i+1}})^{\alpha},$$
	whose sup norm is clearly bounded by $D_{i+1}^{|\alpha|}  \|g \|_\infty$, and thus an element of $g_{D_{i+1}^{|\alpha|}}$. 
	Now plugging the Taylor expansion into the definition of $E_{{\tilde s},g,i}$, we obtain that 
	\begin{align*}
	& E_{{\tilde s},g,i}\leq \mathbb{E}\left[\sup_{f \in C^{\tilde s-1,1}_1}\left|\sum_{A_{i+1} \in \cA_{i+1}} \int_{A_{i+1}} f(x_{A_{i+1}})g(x) \, (\mu-\h \mu^n)(dx) \right|\right] \\
	&+\sum_{1 \leq |\alpha| \leq {\tilde s}-1} \frac{1}{\alpha!} \mathbb{E} \left[\sup_{f \in C^{\tilde s-1,1}_1} \left| \sum_{A_{i+1} \in \cA_{i+1}} \int_{A_{i+1}} D^{\alpha}f(x_{A_{i+1}}) g^{\alpha}(x) \, (\mu-\h \mu^n)(dx) \right|  \right]  \\
	&+ \mathbb{E}\left[\sup_{f \in C^{\tilde s-1,1}_1} \left|\sum_{A_i \in \cA_i} \int_{A_i} R^f(x_{A_i},x_{A_{i+1}})g(x) \, (\mu-\h\mu^n)(dx) \right|\right]. 
	\end{align*}
	Now the first term on the right hand side is just $E_{{\tilde s},g,i+1}$, and the last term is clearly bounded from above by $\sum_{|\alpha|=\tilde{s}} \frac{1}{\alpha!}D_{i+1}^{\tilde s} M_{i, g}$. 
	For each term in the second line corresponding an index $\alpha$, we have $D^{\alpha} f \in C^{{\tilde s}-|\alpha|-1,1}_1$ and $g^{\alpha}(x)$ is an element of $g_{D_{i+1}^{|\alpha|}}$. Therefore we get the estimate
	\begin{align*}
	E_{{\tilde s}, g, i} &\leq E_{{\tilde s}, g, i+1} + \sum_{1 \leq |\alpha| \leq {\tilde s}-1} \frac{1}{\alpha!} E_{{\tilde s}-|\alpha|, g_{D_{i+1}^{|\alpha|}}, i+1} + \sum_{|\alpha|=\tilde{s}} \frac{1}{\alpha!} D_{i+1}^{\tilde s} M_{i, g}. \\
	\end{align*}
	This inequality holds for any $i \in \{1, \dotso, T-1\}$. Summing them up from $i$ to $T-1$, we obtain our claimed result 
	\begin{align*}
	E_{ {\tilde s}, g, i}  \leq E_{{\tilde s}, g, T} + \sum_{j=i}^{T-1} \left[ \sum_{1 \leq |\alpha| \leq {\tilde s}-1} \frac{1}{\alpha!} E_{{\tilde s}-|\alpha|, g_{D_{j+1}^{|\alpha|}}, j+1} + \sum_{|\alpha|=\tilde{s}} \frac{1}{\alpha!}D_{j+1}^{\tilde s} M_{j, g} \right].
	\end{align*}
\end{proof}

\begin{lemma}
	\label{lem:induction_s}
	There exists a constant $C$ independent of $\varepsilon, n, T$ such that for $\tilde{s} \in \mathbb{N}$,  $g \in L^\infty$ and $i \in \{1, \dots, T\}$, the following bound holds:
	\[
	E_{\tilde{s}, g, i} \leq 
	\begin{cases}
	C \|g\|_\infty \left(3^{i(\tilde{s}-\frac{d_{\mu}}{2})}\frac{\varepsilon^{\tilde{s}-d_{\mu}/2}}{n^{1/2}} + ((\varepsilon 3^T)^{\tilde{s}-1}+1) \left(\frac{|\mathcal{A}_T|}{n}\right)^{1/2} \right), \quad & \text{if $d_{\mu} >2 \tilde s$}, \\
	C \|g\|_\infty \left(\frac{T}{n^{1/2}} + ((\varepsilon 3^T)^{\tilde{s}-1}+1) \left(\frac{|\mathcal{A}_T|}{n}\right)^{1/2} \right), \quad & \text{if $d_{\mu} =2 \tilde s$}, \\
	C \|g\|_\infty \left(\frac{(3^T\varepsilon)^{\tilde{s}-d_{\mu}/2}}{n^{1/2}} + ((\varepsilon 3^T)^{\tilde{s}-1}+1) \left(\frac{|\mathcal{A}_T|}{n}\right)^{1/2} \right), \quad & \text{if $d_{\mu} \in (2 \tilde s-2,2\tilde s)$}.
	\end{cases}
	\]
\end{lemma}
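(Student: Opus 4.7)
My plan is to prove the estimate by induction on $\tilde{s}$, using Lemma \ref{lemma:estimateE} as the recursive step and Lemma \ref{lem:decomposition} together with the intrinsic dimension bound $|\mathcal{A}_j| \leq K(\varepsilon 3^j)^{-d_\mu}$ from Lemma \ref{lem:refinement} to estimate individual pieces. For the base case $\tilde{s}=1$, the inner sum over $1 \leq |\alpha| \leq \tilde{s}-1$ in Lemma \ref{lemma:estimateE} is empty, leaving
\[
E_{1, g, i} \leq E_{1, g, T} + \sum_{j=i}^{T-1} d\, D_{j+1}\, M_{j, g}.
\]
I would bound $E_{1, g, T} \leq M_{T, g} \leq \|g\|_\infty(|\mathcal{A}_T|/n)^{1/2}$ using $|f|\leq 1$ for $f \in C_1^{0,1}$ and Lemma \ref{lem:decomposition}, and each summand in the remainder by $\lesssim \|g\|_\infty\, 3^{j(1-d_\mu/2)}\varepsilon^{1-d_\mu/2}/n^{1/2}$. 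The three cases of the statement are then governed by the sign of $1-d_\mu/2$: the geometric series is dominated by $j=i$ when $d_\mu>2$, collapses to a factor $T$ when $d_\mu=2$, and is dominated by $j=T-1$ when $d_\mu<2$.

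For the inductive step, I would apply Lemma \ref{lemma:estimateE} and invoke the inductive hypothesis on each $E_{\tilde{s}-|\alpha|, g_{D_{j+1}^{|\alpha|}}, j+1}$ with $1\leq |\alpha|\leq \tilde{s}-1$. The key algebraic observation is that the $|\alpha|$-dependence cancels in the bulk term: using $\|g_{D_{j+1}^{|\alpha|}}\|_\infty \leq (3^{j+1}\varepsilon)^{|\alpha|}\|g\|_\infty$,
\[
(3^{j+1}\varepsilon)^{|\alpha|} \cdot 3^{(j+1)(\tilde{s}-|\alpha|-d_\mu/2)}\varepsilon^{\tilde{s}-|\alpha|-d_\mu/2} = 3^{(j+1)(\tilde{s}-d_\mu/2)}\varepsilon^{\tilde{s}-d_\mu/2},
\]
and the $|\alpha|=\tilde{s}$ residual $D_{j+1}^{\tilde{s}} M_{j, g}$ exhibits the same profile after inserting $M_{j,g}\lesssim \|g\|_\infty (\varepsilon 3^j)^{-d_\mu/2}n^{-1/2}$. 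Moreover, in each of the three cases of the statement one has $d_\mu>2\tilde{s}-2 \geq 2(\tilde{s}-|\alpha|)$ for every $|\alpha|\geq 1$, so the inductive hypothesis is always invoked in its case~1 form, eliminating any case combinatorics.

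What remains is a geometric-sum computation. Summing the common factor $3^{(j+1)(\tilde{s}-d_\mu/2)}\varepsilon^{\tilde{s}-d_\mu/2}/n^{1/2}$ over $j\in\{i,\dotso,T-1\}$ produces exactly the three claimed regimes: dominated by $j=i$ when $d_\mu>2\tilde{s}$, a factor $T$ when $d_\mu=2\tilde{s}$, and dominated by $j=T-1$ yielding $(3^T\varepsilon)^{\tilde{s}-d_\mu/2}/n^{1/2}$ when $d_\mu<2\tilde{s}$. For the terminal contribution, $\sum_{j=i}^{T-1}(3^{j+1}\varepsilon)^{|\alpha|} \lesssim (3^T\varepsilon)^{|\alpha|}$ (geometric with ratio $3^{|\alpha|}>1$), so multiplying by $((\varepsilon 3^T)^{\tilde{s}-|\alpha|-1}+1)(|\mathcal{A}_T|/n)^{1/2}$ and using $(3^T\varepsilon)^{|\alpha|} \leq 1 + (3^T\varepsilon)^{\tilde{s}-1}$ (valid for $|\alpha|\leq \tilde{s}-1$) recovers the $((3^T\varepsilon)^{\tilde{s}-1}+1)(|\mathcal{A}_T|/n)^{1/2}$ form. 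The initial term $E_{\tilde{s},g,T}$ is absorbed by the same argument as in the base case, contributing the constant ``$+1$'' piece. I expect the main obstacle to be the careful bookkeeping of exponents across the induction; the crucial ingredient making the proof close is the $|\alpha|$-cancellation in the bulk display above, which keeps the bulk rate stable as $\tilde{s}$ grows.
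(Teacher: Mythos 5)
Your proposal is correct and follows essentially the same approach as the paper: induction on $\tilde s$, with Lemma \ref{lemma:estimateE} as the recursive step, Lemma \ref{lem:decomposition} combined with $|\mathcal{A}_j|\lesssim(\varepsilon 3^j)^{-d_\mu}$ for the pieces, the $|\alpha|$-cancellation to identify the common factor $3^{j(\tilde s - d_\mu/2)}\varepsilon^{\tilde s - d_\mu/2}n^{-1/2}$, and a three-regime geometric-series estimate. Your observation that the inductive hypothesis is always invoked in its first-case form (since $d_\mu>2\tilde s-2\geq 2(\tilde s-|\alpha|)$ for $|\alpha|\geq 1$) is exactly what makes the paper's case handling collapse, and the way you dispose of the terminal $|\mathcal{A}_T|$ contribution via $\sum_{j}(3^{j+1}\varepsilon)^{|\alpha|}\lesssim(3^T\varepsilon)^{|\alpha|}\leq 1$ matches the paper's bookkeeping.
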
	
\begin{proof}
	Throughout the proof, $C$ is a generic constant that may change its value from line to line, but will never depend on $n$, $\varepsilon$ or $T$.
	For the proof, we first state some elementary properties:
	\begin{itemize}
		\item[(i)] By Lemma \ref{lem:decomposition}, $E_{\tilde{s}, g, T} \leq \|g\|_\infty \left(\frac{|\mathcal{A}|_T}{n}\right)^{1/2}$, where we simply use that $\|f\|_\infty \leq 1$ for all $f \in C^{\tilde s-1,1}_1$.
		\item[(ii)] By Lemma \ref{lem:decomposition},   $M_{j,g} \leq \lVert g \rVert_{\infty}\sqrt{\frac{|\mathcal{A}|_j}{n}}$, and hence for $k < d_{\mu}/2$,
		\[
		\sum_{j=i}^{T-1} D_{j+1}^k M_{j, g} \leq C \frac{\varepsilon^{k-d_{\mu}/2}}{n^{1/2}} \sum_{j=i}^{T-1} (3^{(k-d_{\mu}/2)j}) \leq C \frac{\varepsilon^{k-d_{\mu}/2}}{n^{1/2}}3^{(k-d_{\mu}/2)i},
		\]
		where we note that $\sum_{j=i}^{T-1} q^j \leq \frac{1}{1-q} q^i \leq C q^i$ for $q = 3^{k-d_{\mu}/2} < 1$.
		\item[(iii)] We have for $k=d_{\mu}/2$
		\[ 
		\sum_{j=i}^{T-1} D_{j+1}^k M_{j, g} \leq C \frac{T}{n^{1/2}},
		\]
		and for $k >d_{\mu}/2$
		\[ 
		\sum_{j=i}^{T-1} D_{j+1}^k M_{j, g} \leq C \frac{(3^T\varepsilon)^{k-d_{\mu}/2}}{n^{1/2}}.
		\]
	\end{itemize}
	
	These properties together with Lemma~\ref{lemma:estimateE} will be enough to yield the claim inductively over increasing $\tilde{s}$. Indeed, for $\tilde{s}=1$, we simply have 
	\begin{align*}
	E_{1, g, i}  \leq E_{1, g, T} + \sum_{j=i}^{T-1}   D_{j+1} M_{j, g},
	\end{align*}
	where the bound for the right hand side follows from property (i) and (ii) directly. 	
	
	For the induction step, assume $\tilde{s} <d_{\mu}/2$ and the claim is true for $1, \dots, \tilde{s}-1$. We build on the lemma \ref{lemma:estimateE}. The first term therein, $E_{\tilde{s}, g, T}$, can be controlled by Property (i) and is thus included in the inequality we want to show by possibly increasing $C$ by $1$. The term $\sum_{|\alpha|=\tilde{s}} \frac{1}{\alpha!}D_{j+1}^{\tilde s} M_{j, g}$ can be treated via Property (ii). To conclude, we only need to treat the terms for $1 \leq |\alpha|\leq \tilde{s}-1$ using the induction hypothesis. Using $D_{j+1} \lesssim 3^j \varepsilon$, we find that 
	\begin{align*}
	&\sum_{j=i}^{T-1} E_{\tilde s-|\alpha|, g_{D_{j+1}^{|\alpha|}}, j+1} \\
	&\leq C \|g\|_\infty \sum_{j=i}^{T-1} D_{j+1}^{|\alpha|} \left(3^{j(\tilde{s}-|\alpha|-\frac{d_{\mu}}{2})}\frac{\varepsilon^{\tilde{s}-|\alpha|-d_{\mu}/2}}{n^{1/2}} + ((\varepsilon 3^T)^{\tilde{s}-|\alpha|-1}+1) \left(\frac{|\mathcal{A}_T|}{n}\right)^{1/2} \right) \\
	&\leq C\|g\|_\infty \sum_{j=i}^{T-1} 3^{j |\alpha|} \varepsilon^{|\alpha|} \left(3^{j(\tilde{s}-|\alpha|-\frac{d_{\mu}}{2})}\frac{\varepsilon^{\tilde{s}-|\alpha|-d_{\mu}/2}}{n^{1/2}} + ((\varepsilon 3^T)^{\tilde{s}-|\alpha|-1}+1) \left(\frac{|\mathcal{A}_T|}{n}\right)^{1/2} \right) \\
	&\leq C\|g\|_\infty \left(\frac{\varepsilon^{\tilde{s}-d_{\mu}/2}}{n^{1/2}} \sum_{j=i}^{T-1} 3^{j(\tilde{s}-\frac{d_{\mu}}{2})} + \left(\frac{|\mathcal{A}_T|}{n}\right)^{1/2}  ((\varepsilon 3^T)^{\tilde{s}-|\alpha|-1}+1) \sum_{j=i}^{T-1} 3^{j |\alpha|} \varepsilon^{|\alpha|}  \right)\\
	&\leq C\|g\|_\infty \left(\frac{\varepsilon^{\tilde{s}-d_{\mu}/2}}{n^{1/2}} 3^{i(\tilde{s}-\frac{d_{\mu}}{2})} + \left(\frac{|\mathcal{A}_T|}{n}\right)^{1/2}  ((\varepsilon 3^T)^{\tilde{s}-1}+1)  \right),
	\end{align*}
	where the final inequality follows from the fact that $\sum_{j=i}^{T-1} 3^{j(\tilde{s}-\frac{d_{\mu}}{2})} \leq C3^{i(\tilde{s}-\frac{d_{\mu}}{2})}$ and $\sum_{j=i}^{T-1} 3^{j |\alpha|} \varepsilon^{|\alpha|}  \leq C (3^T\epsilon)^{|\alpha|}$. The induction and thus the proof for the case $d_{\mu}>2\tilde s$ are complete.
	
	Finally, only one of the two cases $d_{\mu}=2\tilde s, d_{\mu} \in (2\tilde s -2, 2\tilde s)$ could happen. We find that the term $\sum_{|\alpha|=\tilde{s}} \frac{1}{\alpha!}D_{j+1}^{\tilde s} M_{j, g}$ can be controlled by Property (iii), and similarly as in the last paragraph we get that 
	\begin{align*}
	&\sum_{j=i}^{T-1} E_{\tilde s-|\alpha|, g_{D_{j+1}^{|\alpha|}}, j+1} \\
	& \leq C\|g\|_\infty \left(\frac{\varepsilon^{\tilde{s}-d_{\mu}/2}}{n^{1/2}} \sum_{j=i}^{T-1} 3^{j(\tilde{s}-\frac{d_{\mu}}{2})} + \left(\frac{|\mathcal{A}_T|}{n}\right)^{1/2}  ((\varepsilon 3^T)^{\tilde{s}-1}+1)   \right) \\
	& \leq \begin{cases}
	             C\|g\|_\infty \left(\frac{T}{n^{1/2}}+\left(\frac{|\mathcal{A}_T|}{n}\right)^{1/2}  ((\varepsilon 3^T)^{\tilde{s}-1}+1) \right), \quad & \text{if $d_{\mu}=2\tilde s$}, \\
	             C\|g\|_\infty \left(\frac{(3^T\varepsilon)^{\tilde{s}-d_{\mu}/2}}{n^{1/2}}+\left(\frac{|\mathcal{A}_T|}{n}\right)^{1/2}  ((\varepsilon 3^T)^{\tilde{s}-1}+1) \right), \quad & \text{if $d_{\mu} \in (2\tilde s-2,2\tilde s)$}.
	          \end{cases}
	\end{align*}
	In conjugation with the estimate of $E_{\tilde s, g,T}$ from Property (i), we finish the induction step by Lemma~\ref{lemma:estimateE}.
\end{proof}

\begin{proof}[Proof of Theorem~\ref{thm:empiricalrate}]
	Thanks to Remark~\ref{rmk:dim}, it is sufficient to prove the result for $d_{\mu}>2s-2$. According to the definition of intrinsic dimension, we have that 
	\begin{align*}
	\mathbb{E} \left[\sup_{f \in C^{s-1,1}_1} \int f \, (\mu-\h \mu^n)(dx) \right] \leq & \mathbb{E} \left[\sup_{f \in C^{s-1,1}_1} \int_{\bar \Omega} f \, (\mu-\h \mu^n)(dx) \right] \\
	&+ \mathbb{E} \left[\sup_{f \in C^{s-1,1}_1} \int_{\bar \Omega^c} f \, (\mu-\h \mu^n)(dx) \right],
	\end{align*} 
	where the second term on the right hand side is simply bounded by $2 \mu(\bar \Omega^c) $ due to $\| f \|_{\infty}\leq 1$. 
	
	Recalling that $\bar \Omega = \cup_{A_1 \in \cA_1} A_1$, for any $x \in A_1$, we take the Taylor expansion of $f \in C^s_1$
	\begin{align*}
	f(x)=f(x_{A_1})+\sum_{1 \leq |\alpha| \leq { s}-1} \frac{D^{\alpha}f(x_{A_{1}})}{\alpha!}(x-x_{A_{1}})^{\alpha}+R^f(x,x_{A_1}),
	\end{align*} 
	where the last term is bounded by $D_1^s$. Now for each index $\alpha$, define $g^{\alpha}(x):=\sum_{A_1 \in \cA_1} \mathbbm{1}_{\{x \in A_1\}}(x-x_{A_1})^{\alpha}$ where $\| g^{\alpha} \|_{\infty} \leq D_1^{|\alpha|}$. Then taking the supremum over all $f$ in the equation above and taking expectation yield to that 
	\begin{align*}
	\mathbb{E}\left[\sup_{f \in C_1^{s-1,1}} \int_{\bar \Omega} f \,d(\mu - \mu^n)\right] \leq  E_{s, 1, 1} + \sum_{1 \leq |\alpha| \leq s-1} E_{s-|\alpha|, g^{\alpha}, 1}+ D_1^s.
	\end{align*}

	Let us first prove for the case $d_{\mu}>2s$. Invoking Lemma~\ref{lem:induction_s} and that $D_1 \leq 3\varepsilon$, we obtain that 
	\begin{align*}
	\mathbb{E}\left[\sup_{f \in C_1^{s-1,1}} \int_{\bar \Omega} f \,d(\mu - \mu^n)\right] \leq C \left(\varepsilon^s + \frac{\varepsilon^{s-d_{\mu}/2}}{n^{1/2}} + ((\varepsilon 3^T)^{s-1}+1) \left(\frac{|\mathcal{A}_T|}{n}\right)^{1/2}\right),
	\end{align*}
	and therefore
	\begin{align*}
	\mathbb{E}& \left[\sup_{f \in C_1^{s-1,1}} \int f \,d(\mu-\mu^n)\right] \\
	& \leq  C \left(\varepsilon^s + \frac{\varepsilon^{s-d_{\mu}/2}}{n^{1/2}} + ((\varepsilon 3^T)^{s-1}+1) \left(\frac{|\mathcal{A}_T|}{n}\right)^{1/2}\right) + 2 \mu(\bar{\Omega}^c).
	\end{align*}
	Then we choose $T = \log_3(\varepsilon^{-2s/d_{\mu}})$ and $\varepsilon = n^{-1/d_{\mu}}$. The first two terms in the estimate are still of order $n^{-s/d_{\mu}}$, it only remains to deal with the last two terms including $\mathcal{A}_T$ and $\bar{\Omega}$.  Note again that $\varepsilon 3^T \leq 1$, which means $((\varepsilon 3^T)^{s-1}+1) \leq 2$. Further, 
	\[|\mathcal{A}_T|/n \leq C (\varepsilon^{-2s/d_{\mu}} \varepsilon)^{-d_{\mu}}/(\varepsilon^{-d_{\mu}}) \leq C \varepsilon^{2s},
	\]
	and thus 
	\[
	((\varepsilon 3^T)^{s-1}+1) \left(\frac{|\mathcal{A}_T|}{n}\right)^{1/2} \leq C \varepsilon^{s} = C n^{-s/d_{\mu}}.
	\]
	And finally, 
	\[
	2 \mu(\bar{\Omega}^C) \leq C (3^T \varepsilon)^{(s\,d_{\mu})/(d_{\mu}-2s)} \leq C (\varepsilon^{1-2s/d_{\mu}})^{(s\,d_{\mu})/(d_{\mu}-2s)} = C \varepsilon^s = C n^{-s/d_{\mu}}.
	\]
	The overall estimate is complete.

	The argument for $d_{\mu} \leq 2s$ is almost the same. Again by Lemma~\ref{lem:induction_s}, we get that 
	\begin{align*}
	&\mathbb{E} \left[\sup_{f \in C_1^{s-1,1}} \int f \,d(\mu-\mu^n)\right] \\
	&\leq  
	\begin{cases} 
	C \left(\varepsilon^s + \frac{T}{n^{1/2}} + ((\varepsilon 3^T)^{s-1}+1) \left(\frac{|\mathcal{A}_T|}{n}\right)^{1/2}\right) \quad &\text{if $d_{\mu}=2s$}, \\
	C \left(\varepsilon^s + \frac{(3^T\varepsilon)^{s-d_{\mu}/2}}{n^{1/2}} + ((\varepsilon 3^T)^{s-1}+1) \left(\frac{|\mathcal{A}_T|}{n}\right)^{1/2}\right) \quad &\text{if $d_{\mu}\in (2s-2,2s)$}.
	\end{cases}
	\end{align*}
	To conclude, we simply choose $\epsilon= n^{-1/2s}$ and $T=\frac{1}{2s}\log(n) $.
\end{proof}

\section{Sample complexity of regularized optimal transport}\label{sec:samplecomplexity}
Throughout this section, we make the standing assumption that each marginal $\mu_i$ is supported on a compact subset $X_i$ of $\mathbb{R}^{d_i}$, $i=1,\dotso, N$. 
Further, we assume that $\varphi$ is dual regular.

Our first result quantifies the regularity of the optimal dual potentials depending on the regularity of $c$ and $\psi$.
\begin{proposition}[Regularity of optimal potentials]\label{prop:regularity}
	Let $s \in \mathbb{N}$ and assume $c \in C^s(X)$, $\psi \in C_{loc}^{s,1}(\mathbb{R})$. Then there exists a dual optimizer $h = \oplus_{i=1}^N h_i$ of $\OT_{\varphi}(\bs \mu)$ such that for all $i=1, \dots, N$, $h_i \in C^{s-1,1}_{K}$,
	where $K$ depends only on $\psi, c$ and the diameter of the space $X$.
	
\end{proposition}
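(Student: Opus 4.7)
The plan is to view the first-order condition from Lemma~\ref{lemma:dualoptimizer} as an implicit equation for $h_i^*(x_i)$ and to invoke a parametric implicit function theorem. Fix $i\in\{1,\dots,N\}$ and set $G_i(x^{-i}) := \sum_{j\neq i} h_j^*(x_j)$, which is bounded and measurable; crucially, its regularity in the variables $x_j$, $j\neq i$, plays no role since $G_i$ only enters through an integral against $P^{-i}$. Define
\[
\Phi_i(x_i, v) := \int \psi'\bigl(v + G_i(x^{-i}) - c(x_i, x^{-i})\bigr)\,P^{-i}(dx^{-i}),
\]
so that the FOC becomes $\Phi_i(x_i, h_i^*(x_i)) = 1$ for all $x_i\in X_i$.

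Next I would verify that $\Phi_i$ is jointly $C^{s-1,1}$ on $X_i\times J$ for any bounded interval $J$ containing the range of $h_i^*$. Compactness of $X$ together with boundedness of $h^*$ (Lemma~\ref{lemma:dualoptimizer}) confine the argument of $\psi'$ to a bounded interval $I\subset\mathbb{R}$; since $\psi\in C^{s,1}_{loc}$ gives $\psi'\in C^{s-1,1}_{loc}$ and $c\in C^s(X)$, repeated differentiation under the integral sign (justified by dominated convergence on compact sets) gives $\Phi_i\in C^{s-1,1}(X_i\times J)$ with norm bounded in terms of $\|\psi'\|_{C^{s-1,1}(I)}$, $\|c\|_{C^s(X)}$ and $\operatorname{diam}(X)$.

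The hard part will be the uniform coercivity estimate
\[
\partial_v\Phi_i(x_i, h_i^*(x_i)) = \int \psi''\bigl(h_i^*(x_i)+G_i-c\bigr)\,dP^{-i} \,\geq\, \lambda > 0,
\]
uniformly in $x_i\in X_i$. Dual regularity only provides strict convexity of $\psi$ on $[x_0-\delta,\infty)$, not a pointwise lower bound on $\psi''$, so the bound must be extracted from the FOC itself. My plan is to observe that if the argument $h_i^*(x_i)+G_i-c$ were to sit almost surely below $x_0-\delta$, then by monotonicity of $\psi'$ and $\psi'(x_0)=1$, the identity $\int\psi'\,dP^{-i}=1$ would be violated; hence the argument must reach into $[x_0-\delta,\infty)$ on a set of $P^{-i}$-mass bounded below uniformly in $x_i$, by a compactness argument using that $\psi''$ is continuous on the compact interval $I$ (for $s\geq 2$; the case $s=1$ can be handled separately since then the claim only asks for global Lipschitzness, which can be read off more directly from the FOC by comparing shifts).

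Finally, I would apply the implicit function theorem in the $C^{s-1,1}$ category to $\Phi_i(x_i,v)-1=0$: with $\partial_v\Phi_i\geq\lambda$ and $\Phi_i\in C^{s-1,1}$, this yields $h_i^*\in C^{s-1,1}(X_i)$ locally, with $C^{s-1,1}$-norm estimated by $\|\Phi_i\|_{C^{s-1,1}}$ and powers of $\lambda^{-1}$, hence by $\psi$, $c$, and $\operatorname{diam}(X)$. Compactness of $X_i$ upgrades this to a global bound via a finite cover. The main obstacle is the coercivity lower bound described above; once it is in place, the remainder is a standard bootstrap via parametric IFT.
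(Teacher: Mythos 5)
Your proposal follows essentially the same route as the paper: treat the first-order condition from Lemma~\ref{lemma:dualoptimizer} as an implicit equation $F_i(x_i,h_i(x_i))=1$, show the $v$-derivative $g_i(x_i)=\partial_v F_i(x_i,h_i(x_i))$ is uniformly bounded below, and bootstrap regularity via the implicit function theorem (the paper carries out the bootstrap by an explicit induction on the order of the derivative, while you invoke a parametric IFT in the $C^{s-1,1}$ category -- a presentational difference only). One small remark on the coercivity step, which you rightly flag as the crux: rather than a soft compactness argument, the paper extracts an explicit lower bound by combining $\psi'(y)\le\psi'(x_0-\delta)<1$ for $y<x_0-\delta$ with $\psi'\le\psi'(2C_0)$ on the bounded range, so that the FOC forces $1-p(x_i)\ge \frac{1-\psi'(x_0-\delta)}{\psi'(2C_0)-\psi'(x_0-\delta)}$ uniformly in $x_i$, which avoids any continuity-in-$x_i$ considerations for $h_i$ and directly yields an explicit constant $K$.
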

\begin{proof}
	\emph{Step 1:} By our standing assumption that $\varphi$ is dual regular, we can take the dual optimizer $h$ from Lemma~\ref{lemma:dualoptimizer}, which we can normalize so that $\int h_i \,d\mu_i$ is constant in $i$ and thus $\sum_{i=1}^\infty \|h_i\|_\infty$ is bounded. Let us prove that $h_i$ is Lipschitz. Take two $x_i, \tilde{x}_i$, and assume without loss of generality that $h_i(x_i) \leq h_i(\tilde{x}_i)$. Due to the compactness of $X$, there exists a positive constant $L$ such that $|c(x_i,x^{-i})-c(\tilde{x}_i,x^{-i})| \leq L|x-\tilde{x}|$. Subsequently, we obtain that 
	\begin{align*}
	1 =& \int_{X^{-i}} \psi' ( h_i(\tilde x_i)+{h}^{-i}(x^{-i})-c(\tilde{x}_i,x^{-i})) \, P^{-i}(dx^{-i}) \\
	\leq &\int_{{X}^{-i}} \psi' ( h_i(x_i)+L|x_i-\tilde x_i| +h^{-i}(x^{-i})-c(\tilde x_i,x^{-i})) \, P^{-i}(dx^{-i}).
	\end{align*}
	Since $\psi'$ is strictly increasing over $[x_0-\delta,\infty)$, we conclude that $h_i(x_i) \leq h_i(\tilde{x}_i) \leq h_i(x_i)+L|x_i-\tilde x_i|$, i.e., $h_i$ is $L$-Lipschitz. 
	
	\vspace{5pt}
%
	\emph{Step 2:} 
	For each $i$, let us denote the defining function of $h_i$ by
		\begin{align}\label{eq:Fi}
		F_i(x_i,y_i)= \int_{X^{-i}} \psi'(y_i+ h^{-i}(x^{-i}) -c(x_i,x^{-i})) \, P^{-i}(dx^{-i}),
		\end{align}
		where $h_i$ satisfies $F_i(x_i,h_i(x_i))=1$ for all $ x_i \in X_i$. 
	We already proved that $h_i \in C^{0,1}_K$ for $i=1,\dotso ,N$. Supposing $s \geq 2$ and that $\{h_i\}_{i=1}^N \subset C_K^{k,1}$ is true for $k \geq 0$ with $k\leq s-2$, let us prove that $\{h_i\}_{i=1}^N \subset C_K^{k+1,1}$.

	Due to the compactness of $X^{-i}$ and our assumption, $F_i: X_i \times \mathbb{R} \to \mathbb{R}$ defined in \eqref{eq:Fi} is differentiable, and has first order derivatives 
	\begin{align}\label{eq:implicitfunctiontheorem}
	& \frac{\pa F_i}{\pa x_i}(x_i,y_i)=-\int_{{X}^{-i}} \psi''(y_i+ h^{-i}(x^{-i}) -c(x_i,x^{-i})) \pa_{x_i} c(x_i,x^{-i}) \, P^{-i}(dx^{-i}), \\
	&\frac{\pa F_i}{\pa y_i}(x_i,y_i)=\int_{{X}^{-i}} \psi''(y_i+ h^{-i}(x^{-i}) -c(x_i,x^{-i})) \, P^{-i}(dx^{-i}).
	\end{align}
	The defining equation $F_i(x_i, h_i(x_i)) = 1$ for $h_i$ thus yields, according to the implicit function theorem (c.f.~\cite[Theorem 9.28]{MR0385023}), that $h_i$ is differentiable and we have 
	\begin{align*}
	&\pa_{x_i} h_i(x_i) \int_{{X}^{-i}} \psi''(h_i(x_i)+ h^{-i}(x^{-i}) -c(x_i,x^{-i})) \, P^{-i}(dx^{-i})\\
	&= \int_{{X}^{-i}} \psi''(h_i(x_i)+ h^{-i}(x^{-i}) -c(x_i,x^{-i})) \pa_{x_i} c(x_i,x^{-i}) \, P^{-i}(dx^{-i}).
	\end{align*}
	Denote  
	\begin{align}
	f_i(x_i):&= \pa_{x_i} F_i(x_i, h_i(x_i)) \label{eq:fi}\\
	&=-\int_{{X}^{-i}} \psi''(h_i(x_i)+ h^{-i}(x^{-i}) -c(x_i,x^{-i})) \pa_{x_i} c(x_i,x^{-i}) \, P^{-i}(dx^{-i}),  \notag \\
	g_i(x_i):&= \pa_{y_i} F_i(x_i,h_i(x_i)) \label{eq:gi} \\
	&=\int_{{X}^{-i}} \psi''(h_i(x_i)+ h^{-i}(x^{-i}) -c(x_i,x^{-i})) \, P^{-i}(dx^{-i}) \notag, 
	\end{align}
	and hence \eqref{eq:implicitfunctiontheorem} can be rewritten as 
	\begin{align}\label{eq:hiequation}
	f_i(x_i)+ g_i(x_i) h^{(1)}_i(x_i)=0.
	\end{align}
	
Let us show that $g_i(x_i)$ is bounded from below by a positive constant. Define $$p(x_i):=\mathbb{P}^{-i}[\{x^{-i}: \, h_i(x_i)+ h^{-i}(x^{-i}) -c(x_i,x^{-i}) < x_0-\delta \}].$$ 
According to our assumption, there exists a positive constant $\alpha$ such that $\psi''(x)\geq \alpha$ for all $x \in [x_0-\delta,\infty)$. Therefore for all $x^{-i}$ such that $ h_i(x_i)+ h^{-i}(x^{-i}) -c(x_i,x^{-i}) < x_0-\delta$ we have $\psi''(h_i(x_i)+ h^{-i}(x^{-i}) -c(x_i,x^{-i})) \leq 1- \delta \alpha$. Now by the first order condition and the boundedness of $\sum_{i=1}^N \lVert h_i\rVert_{\infty}+\lVert c \rVert_{\infty}$, we obtain that 
\begin{align*}
1= & \int_{X^{-i}} \psi' ( h_i( x_i)+{h}^{-i}(x^{-i})-c(x_i,x^{-i})) \, P^{-i}(dx^{-i}) \\
\leq & p(x_i)(1-\delta \alpha) + (1-p(x_i))\psi'(2C_0).
\end{align*}
Therefore $1-p(x_i) \geq \frac{\delta \alpha}{\psi'(2C_0)-1+\delta \alpha}$, and hence $g_i(x_i) \geq \alpha(1-p(x_i))$ is uniformly bounded from below.

	Taking $(k-1)$-th derivative of \eqref{eq:hiequation} with respect to $x_i$ yields that
	\begin{align*}
	f_i^{(k-1)}(x_i) + \sum_{j=1}^{k-1} {{k-1}\choose{j}} g_i^{(j)}(x_i)  \, h_i^{(k-j)}(x_i)+ g_i(x_i) h_i^{(k)}(x_i)=0,
	\end{align*}
and thus 
	\begin{align}\label{eq:h(k).derivative}
	h_i^{(k)}(x_i)= -\frac{f_i^{(k-1)}(x_i) + \sum_{j=1}^{k-1} {{k-1}\choose{j}} g_i^{(j)}(x_i)  \, h_i^{(k-j)}(x_i)}{g_i(x_i)}.
	\end{align}
	
	From \eqref{eq:fi} and \eqref{eq:gi}, it can be seen that $f_i^{(k-1)}, g_i^{(k-1)}$ are differentiable, due to our induction hypothesis and the exchangeability of integral and derivative over compact sets. Therefore, $h_i^{(k)}$ is differentiable by \eqref{eq:h(k).derivative}, the formula given by 
	\begin{align*}
	h_i^{(k+1)}(x_i)= -\frac{f_i^{(k)}(x_i) + \sum_{j=1}^{k} {{k}\choose{j}} g_i^{(j)}(x_i)  \, h_i^{(k+1-j)}(x_i)}{g_i(x_i)}.
	\end{align*}
	Also we note that  $f_i^{(k)}, g_i^{(k)}$ are Lipschitz due to the fact that $\psi^{(j)}$ is Lipschitz for all $j=1, \dotso, s$. Then since $g_i$ is uniformly bounded away from zero and from above, one can easily obtain the Lipschitz property of $h_i^{(k+1)}$. Indeed, $h_i^{(k+1)}$ is differentiable almost everywhere, and is equal to 
	\begin{align*}
	h_i^{(k+2)}(x_i)=-\frac{f_i^{(k+1)}(x_i) + \sum_{j=1}^{k+1} {{k+1}\choose{j}} g_i^{(j)}(x_i)  \, h_i^{(k+2-j)}(x_i)}{g_i(x_i)},
	\end{align*}
	where $f_i^{(k+1)}$ and $g_i^{(k+1)}$ are weak derivatives of Lipschitz functions $f_i^{(k)}$ and  $g_i^{(k)}$.

	Noting that $|f_i^{(k+1)}(x_i) + \sum_{j=1}^k {{k+1}\choose{j}} g_i^{(j)}(x_i)  \, h_i^{(k+2-j)}(x_i)|$ is bounded only depending on $\lVert \psi^{(j+2)} \rVert_{\infty}$, $\lVert c^{((j+1)\vee 0) } \rVert_{\infty}$, $\lVert h^{(j\vee 0)}_i \lVert_{\infty}, j=-2,\dotso,k+1$,  together with the induction hypothesis we obtain $h_i \in C^{k+1,1}_K$. 
\end{proof}

The lemma below follows the idea by \cite{mena2019statistical}.
\begin{lemma}[Dual continuity of $\OT_{\varphi}$]\label{lemma:dualcon}
	Let $s \in \mathbb{N}$ and $h, \tilde{h}$ be dual optimizers for $\OT_{\varphi}(\bs\mu)$ and $\OT_{\varphi}(\tilde{\mu}_1, \mu_2, \dots, \mu_N)$, respectively. Assume $K > 0$ such that $(h - \psi(h - c)), (\tilde{h} - \psi(\tilde{h} - c)) \in C^{s-1,1}_{K}(X)$.
	Then
	\[
	\left| \OT_{\varphi}(\bs\mu)-\OT_{\varphi}(\tilde{\mu}_1, \mu_2, \dots, \mu_N) \right| \leq 3  \|\mu_1 - \tilde{\mu}_1\|_{C^{s-1,1}_{K}(X_1)}=3  K\|\mu_1 - \tilde{\mu}_1\|_{C^{s-1,1}_{1}(X_1)},
	\]
	where $\|\mu_1 - \tilde{\mu}_1\|_{C^{s-1,1}_{K}(X_1)}:= \sup_{f \in C^{s-1,1}_K(X_1)} \int f \, d(\mu_1-\tilde{\mu}_1)$. 
\end{lemma}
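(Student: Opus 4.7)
The plan is to combine duality for $\OT_\varphi$ with suboptimality of the other dual potential, and then reduce the resulting difference to an integral against the signed measure $\mu_1-\tilde\mu_1$ tested against a function in $C^{s-1,1}_K(X_1)$.

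First, by Lemma~\ref{lem:primalexistence}, duality yields $\OT_{\varphi}(\bs\mu)=\int (h-\psi(h-c))\,dP$ and $\OT_{\varphi}(\tilde\mu_1,\mu_2,\dots,\mu_N)=\int (\tilde h-\psi(\tilde h-c))\,d\tilde P$, where $P=\mu_1\otimes\mu_2\otimes\cdots\otimes\mu_N$ and $\tilde P=\tilde\mu_1\otimes\mu_2\otimes\cdots\otimes\mu_N$. Using $\tilde h$ as a feasible (but in general suboptimal) dual candidate for the $\bs\mu$-problem, and $h$ for the $(\tilde\mu_1,\mu_2,\dots)$-problem, I would obtain the two one-sided bounds
\[
\OT_{\varphi}(\bs\mu)-\OT_{\varphi}(\tilde\mu_1,\mu_2,\dots) \le \int (h-\psi(h-c))\,d(P-\tilde P),
\]
and symmetrically with $\tilde h$. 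Hence the absolute value is controlled by the maximum of those two integrals against the signed measure $P-\tilde P$.

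Next, since only the first marginal is perturbed, $P-\tilde P$ factors as $(\mu_1-\tilde\mu_1)\otimes\mu_2\otimes\cdots\otimes\mu_N$. Writing $f:=h-\psi(h-c)$ (and analogously $\tilde f$) and Fubini, the right-hand side becomes $\int F\,d(\mu_1-\tilde\mu_1)$ with
\[
F(x_1):=\int f(x_1,x_2,\dots,x_N)\,d(\mu_2\otimes\cdots\otimes\mu_N)(x_2,\dots,x_N).
\]
The key regularity point is that $F\in C^{s-1,1}_K(X_1)$: the hypothesis $f\in C^{s-1,1}_K(X)$ guarantees that every partial derivative $D^\alpha_{x_1}f$ with $|\alpha|\le s-1$ is uniformly bounded by $K$, and Lipschitz in $x_1$ with constant $K$ for $|\alpha|=s-1$; since the remaining variables are integrated against a probability measure, exchange of integral and derivative (justified on the compact domain $X$ by dominated convergence, cf.~\cite[Theorem 2.27]{folland1999real}) transfers these bounds to $F$. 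By the very definition of $\|\mu_1-\tilde\mu_1\|_{C^{s-1,1}_K(X_1)}$, this gives one-sided bounds of size $\|\mu_1-\tilde\mu_1\|_{C^{s-1,1}_K(X_1)}$, and since $-F\in C^{s-1,1}_K(X_1)$ as well the same bound controls the absolute value. Combining the two directions, and absorbing any constants arising from normalization of the dual potentials into a single numerical factor, yields the stated bound with constant $3$.

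The main technical step is the regularity transfer from $f\in C^{s-1,1}_K(X)$ to $F\in C^{s-1,1}_K(X_1)$. The bookkeeping for derivatives up to order $s-1$ is routine, but care is needed to verify that the Lipschitz constant of the top-order derivative is preserved under marginalization (this follows from $|D^\alpha F(x_1)-D^\alpha F(x_1')|\le\int|D^\alpha f(x_1,\cdot)-D^\alpha f(x_1',\cdot)|\,d(\mu_2\otimes\cdots)\le K|x_1-x_1'|$ for $|\alpha|=s-1$). The last equality in the statement is then immediate from the linearity $C^{s-1,1}_K(X_1)=K\cdot C^{s-1,1}_1(X_1)$ via scaling of the test function by $1/K$.
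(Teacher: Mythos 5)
Your argument is correct, and it rests on the same two ingredients as the paper's proof: dual attainment plus cross-feasibility of each problem's dual potential in the other problem, and the transfer of the $C^{s-1,1}_K$ regularity from the integrand $h-\psi(h-c)$ to its marginalization $F=\int (h-\psi(h-c))\,dP^{-1}$ (the paper introduces exactly these marginalized functions, called $a$ and $b$ there, and asserts their regularity without the verification you spell out). Where you diverge is in the algebraic decomposition. The paper writes $\left|\int a\,d\mu_1-\int b\,d\tilde\mu_1\right|\le\left|\int a\,d(\mu_1-\tilde\mu_1)\right|+\left|\int(a-b)\,d\tilde\mu_1\right|$, uses optimality of $b$ to identify the sign of the second term, re-splits it as $\int(b-a)\,d(\tilde\mu_1-\mu_1)+\int(b-a)\,d\mu_1$, and uses optimality of $a$ to discard the last piece; since $b-a\in C^{s-1,1}_{2K}$ this accumulates the factor $1+2=3$. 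You instead use the symmetric two-sided suboptimality bounds $\OT_\varphi(\bs\mu)-\OT_\varphi(\tilde\mu_1,\dots)\le\int F\,d(\mu_1-\tilde\mu_1)$ and its mirror image, which, together with the symmetry of the test class under negation, bounds the absolute difference directly by $\|\mu_1-\tilde\mu_1\|_{C^{s-1,1}_K(X_1)}$ — i.e., you actually obtain the inequality with constant $1$, which of course implies the stated bound with constant $3$ (the norm is nonnegative). So your route is a genuine, slightly cleaner variant of the same strategy, and it yields a sharper constant; there is no gap.
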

\begin{proof}
	Let $a := \int (h - \psi(h - c)) \,dP^{-1} \in C^{s-1}_{K}(X_1)$ and $b := \int (\tilde{h} - \psi(\tilde{h} - c)) \,dP^{-1} \in C^{s-1,1}_{K}(X_1)$. We calculate
	\begin{align*}
	&\left| \OT_{\varphi}(\bs\mu) - \OT_{\varphi}(\tilde{\mu}_1, \mu_2, \dots, \mu_N)\right|
	= \left| \int a \,d\mu_1 - \int b\,d\tilde{\mu}_1\right| \\
	&\leq \left| \int a \,d(\mu_1 - \tilde{\mu}_1) \right| + \left| \int (a-b) \,d\tilde{\mu}_1 \right| \leq \|\mu_1 - \tilde{\mu}_1\|_{C_{K}^{s-1,1}} + \int (b-a) \,d\tilde{\mu}_1,
	\end{align*}
	where the final term can be estimated via
	\[
	\int (b-a)\,d\tilde{\mu}_1 = \int (b-a) \,d(\tilde{\mu}_1 - \mu_1) + \int (b-a) \,d\mu_1 \leq 2 \|\mu_1 - \tilde{\mu}_1\|_{C_{K}^{s-1,1}(X_1)},
	\]
	since $\int (b-a) \,d\mu_1 \leq 0$. Putting the two inequalities together yields the claim.
\end{proof}

The following theorem is a combination of Theorem~\ref{thm:empiricalrate}, Proposition~\ref{prop:regularity}, and Lemma~\ref{lemma:dualcon}. We emphasize again that the compactness of space $X$, dual regularity of $\varphi$ and \eqref{eq:strictconvexconjugate} are standing assumptions, and recall the definition of $\psi$ from \eqref{eq:psidef}.
\begin{theorem}[Sample complexity of $\OT_{\varphi}$]\label{thm:samplecomplexity}
	Assume $\mu_1, \dots, \mu_N$ satisfy $I(d_\mu, s)$ for some $s \in \mathbb{N}$, $d_\mu \in (2s, \infty)$ and that $c \in C^s(X)$, $\psi \in C^{s,1}(\mathbb{R})$. Then we have 
	\[
	\mathbb{E}\left[ \left|\OT_{\varphi}(\bs\mu) - \OT_{\varphi}(\h\mu_1^n, \dots, \h \mu_N^n)\right| \right] \lesssim n^{-s/d_\mu}.
	\]
	\begin{proof}
		Note that our Proposition~\ref{prop:regularity} holds for any $(\mu_1,\dotso,\mu_N)$ with bounded support, and hence the assumption of Lemma~\ref{lemma:dualcon} is always satisfied. The difference $| \OT_{\varphi}(\mu_1,\dotso, \mu_N)- \OT_{\varphi}(\h \mu_1,\dotso, \h \mu_N)|$ is bounded from above by 
		\begin{align*}
		&\hspace{4.75mm}| \OT_{\varphi}(\mu_1,\mu_2, \dotso,\mu_N)- \OT_{\varphi}(\h \mu_1,\mu_2, \dotso, \mu_N)| \\
		&+| \OT_{\varphi}(\h \mu_1,\mu_2, \dotso,\mu_N)- \OT_{\varphi}(\h \mu_1,\h \mu_2, \dotso, \mu_N)|  \\
		&  \hspace{4cm} \vdots \\
		&+ | \OT_{\varphi}(\h \mu_1, \dotso,\h \mu_{N-1}, \mu_N)- \OT_{\varphi}(\h \mu_1, \dotso, \h \mu_{N-1}, \h \mu_N)|. 
		\end{align*} 
		Applying Lemma~\ref{lemma:dualcon} to every term in the above, we obtain that 
		\begin{align*}
		| \OT_{\varphi}(\mu_1,\dotso, \mu_N)- \OT_{\varphi}(\h \mu_1,\dotso, \h \mu_N)| \leq 3K \sum_{i=1}^N \| \mu_i - \h \mu_i \|_{C^{s-1,1}_1(X_i)}. 
		\end{align*}
		Taking expectation on both sides and using Theorem~\ref{thm:empiricalrate}, we conclude the result. 
	\end{proof}
\end{theorem}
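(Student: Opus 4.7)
The plan is to combine the three main ingredients that have already been developed in the preceding sections: Proposition~\ref{prop:regularity} (regularity of dual optimizers), Lemma~\ref{lemma:dualcon} (dual-based stability of the optimal value in a smooth dual norm), and Theorem~\ref{thm:empiricalrate} (the empirical rate for $C^{s-1,1}_1$ test functions exploiting the intrinsic dimension). Since the assumption of Lemma~\ref{lemma:dualcon} concerns only one marginal at a time, I would first reduce to that single-marginal case via a telescoping argument:
\begin{align*}
|\OT_{\varphi}(\bs\mu) - \OT_{\varphi}(\h\mu_1^n,\dots,\h\mu_N^n)|
\leq \sum_{i=1}^N |\OT_{\varphi}(\nu_1^{(i-1)},\dots,\nu_N^{(i-1)}) - \OT_{\varphi}(\nu_1^{(i)},\dots,\nu_N^{(i)})|,
\end{align*}
where at stage $i$ the marginal $\mu_i$ has been replaced by $\h\mu_i^n$ while the others are left in their intermediate state.

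For each such term only the $i$-th marginal changes, so Lemma~\ref{lemma:dualcon} applies provided its smoothness hypothesis on the integrated dual potentials $h-\psi(h-c)$ holds uniformly across all configurations appearing in the telescoping sum, with a single constant $K$. This is exactly what Proposition~\ref{prop:regularity} furnishes: under $c\in C^s(X)$ and $\psi\in C^{s,1}(\mathbb{R})$, the coordinate potentials $h_j$ lie in $C^{s-1,1}_K$, and then the chain rule combined with the smoothness of $c$ and $\psi$ transfers the $C^{s-1,1}_K$ regularity to $h-\psi(h-c)$, with $K$ depending only on $\psi$, $c$, and the (compact) diameter of $X$. Crucially, $K$ does not depend on which marginals are used, so we may use the \emph{same} $K$ for the random empirical marginals as for the true ones. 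Applying Lemma~\ref{lemma:dualcon} at each stage then yields the deterministic bound
\begin{align*}
|\OT_{\varphi}(\bs\mu) - \OT_{\varphi}(\h\mu_1^n,\dots,\h\mu_N^n)|
\leq 3K \sum_{i=1}^N \|\mu_i - \h\mu_i^n\|_{C^{s-1,1}_1(X_i)}.
\end{align*}

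Finally, I would take expectation and invoke Theorem~\ref{thm:empiricalrate} marginal by marginal. Since each $\mu_i$ satisfies $I(d_\mu,s)$ with $d_\mu>2s$, that theorem gives $\mathbb{E}[\|\mu_i - \h\mu_i^n\|_{C^{s-1,1}_1(X_i)}] \lesssim n^{-s/d_\mu}$, and summing over $i=1,\dots,N$ preserves the rate. The only conceptual obstacle is really the verification that $K$ can be chosen independently of the (random) marginals in the telescoping sum; once one reads Proposition~\ref{prop:regularity} carefully and observes that its constant is governed by $\|\psi^{(j)}\|_\infty$ on a bounded range, $\|c^{(j)}\|_\infty$, and $\mathrm{diam}(X)$ alone, this uniformity is automatic and the remaining steps are essentially bookkeeping.
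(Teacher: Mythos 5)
Your proposal is correct and follows essentially the same route as the paper: a telescoping decomposition over the marginals, Lemma~\ref{lemma:dualcon} applied to each term with the uniform constant $K$ supplied by Proposition~\ref{prop:regularity} (which indeed depends only on $\psi$, $c$ and the diameter of $X$, hence is valid for the random empirical marginals as well), followed by taking expectations and invoking Theorem~\ref{thm:empiricalrate}. Your explicit remark that the $C^{s-1,1}_K$ regularity must be transferred from the coordinate potentials $h_i$ to the integrand $h-\psi(h-c)$ is a detail the paper leaves implicit, and you handle it correctly.
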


	\begin{remark}\label{rmk:edependence}
		While the order $n^{-s/d_{\mu}}$ given by Theorem \ref{thm:samplecomplexity} is independent of the regularization parameter $\varepsilon$ (cf.~equation \eqref{eq:DOTintro}), it is nevertheless important to discuss the influence on the constants involved. One way of incorporating $\varepsilon$ is to study $\OT_{\varepsilon \varphi}$ and write the result from Theorem \ref{thm:empiricalrate} as
		\[
			\mathbb{E}\left[ \left|\OT_{\varepsilon\varphi}(\bs\mu) - \OT_{\varepsilon \varphi}(\h\mu_1^n, \dots, \h \mu_N^n)\right| \right] \leq C_\varepsilon n^{-s/d_\mu}.
		\]
		In this work (similar to \cite{genevay2019sample} in the entropic case), the constant $C_\varepsilon$ arises through $L^\infty$-bounds on the derivatives of the dual potentials shown in Proposition \ref{prop:regularity}. In the entropic case, it is well known that these bounds scale like $\|h^{(k)}\|_\infty \lesssim \varepsilon^{-(k-1)}$, see for instance \cite[Theorem 2]{genevay2019sample}. However, to arrive at the constant $C_\varepsilon$, we also need to take the term $\psi(h-c)$ in the dual into account, which is correspondingly exponential in the entropic case, and thus \cite[Theorem 3]{genevay2019sample} obtains a constant $C_\varepsilon \sim \varepsilon^{-\lfloor d/2\rfloor} \exp(\kappa/\varepsilon)$ for some $\kappa > 0$ in the entropic case.
		
		In the case of general divergences, obtaining explicit bounds for $\|h^{(k)}\|_\infty$ is much more difficult than in the entropic case. The reasons are twofold: Firstly, the entropic case is essentially the only one where we can solve the first order equations for $h_i$ explicitly, and secondly, the method of proof in Proposition \ref{prop:regularity} depends on a strict convexity estimate for $\psi$, which cannot be made quantitative without further assumptions. Nevertheless, one very relevant case beyond the entropic one, which is the one of polynomial divergences, can be made explicit, see Lemma \ref{lem:explicitepsilon}. There, we show that for $\psi_{\varepsilon}(y) = \frac{(y_+)^\beta}{\varepsilon^{\beta-1}}$ for $\beta \in \mathbb{N}_{\geq 2}$, we get the order $\|h^{(k)}\|_{\infty} \lesssim \varepsilon^{-2(k-1)}$, which then leads to an overall bound of order $C_\varepsilon \sim \varepsilon^{-3\beta + 5}$. This means that while the bounds for $\|h^{(k)}\|_\infty$ are worse compared to the entropic case, the overall dependence is still improved compared to the entropic case, since the influence of $\psi$ is no longer exponential.
		
		
	\end{remark}

\appendix
\section{Scaling in $\varepsilon$}
	\begin{lemma}\label{lem:explicitepsilon}
		Assume $\psi_{\varepsilon}(y) = \frac{(y_+)^\beta}{\varepsilon^{\beta-1}}$ for some $\beta \in \mathbb{N}$, $\beta \geq 2$, is the dual of $\varepsilon \varphi$ satisfying Definition \ref{assume1}. Let $c \in C^{\beta-1}$ and let $X$ be compact. Let $h_i^\varepsilon$ be the optimal dual potentials satisfying the first order conditions of Lemma \ref{lemma:dualoptimizer}, i.e.,
		\begin{align*}
		1= \beta \int_{X^{-i}} \left(\frac{h^{\varepsilon}(x)-c(x)}{\varepsilon} \right)_{+}^{\beta-1} \, P^{-i}(dx^{-i}), \quad \text{for all $x_i \in X_i$, $i=1, \dots, N$}.
		\end{align*}
		Then, there exists $C > 0$ independent of $\varepsilon$ such that for $k\leq \beta-2$, we have $\|h_i^{\varepsilon, (k)}\|_\infty \leq C \varepsilon^{-2(k-1)}$ and $h_i^{(\beta-2)}$ is Lipschitz continuous with constant bounded by $C \varepsilon^{-2(\beta-2)}$. Therefore we have that 
		\begin{align*}
		\lVert h^{\varepsilon}-\psi_{\varepsilon}(h^{\varepsilon}-c) \rVert_{C^{\beta-2,1}} \leq \frac{C}{\varepsilon^{3\beta-5}}
		\end{align*}
		\begin{proof}
			
			As in \emph{Step 1} of Proposition~\ref{prop:regularity}, it can be seen that $\{h^{\varepsilon}_i\}_{i=1,\dotso,N}$ are $L$-Lipschitz, where $L$ is the Lipschitz constant of the cost function $c$ independent of the scaling $\varepsilon$, and hence $\lVert h_i^{\varepsilon,(1)}\rVert_{\infty} \leq L$, $\sum_{i=1}^N \lVert h^{\varepsilon}_i \rVert_{\infty} \leq C_0$. Together with the first order condition 
			\begin{align*}
			1= \beta \int_{X^{-i}} \left(\frac{h^{\varepsilon}(x)-c(x)}{\varepsilon} \right)_{+}^{\beta-1} \, P^{-i}(dx^{-i}), \quad \text{for all $x_i$},
			\end{align*}
			we have the estimate
			\begin{align*}
			g^{\varepsilon}_i(x_i)=& \frac{\beta(\beta-1)}{\varepsilon} \int_{X^{-i}} \left(\frac{h^{\varepsilon}(x)-c(x)}{\varepsilon} \right)_{+}^{\beta-2} \, P^{-i}(dx^{-i}) \\
			\geq & \frac{\beta(\beta-1)}{2C_0} \int_{X^{-i}} \left(\frac{h^{\varepsilon}(x)-c(x)}{\varepsilon} \right)_{+}^{\beta-1} \, P^{-i}(dx^{-i})=\frac{\beta(\beta-1)}{2C_0},
			\end{align*}
			and thus $g^{\varepsilon}_i$ has a uniform lower bound independent of $\varepsilon$. Also it follows by H\"{o}lder's inequality that 
			\begin{align*}
			g_i^{\varepsilon,(1)}(x_i)=&\frac{\beta(\beta-1)(\beta-2)}{\varepsilon^2}\int_{X^{-i}} \left(\frac{h^{\varepsilon}(x)-c(x)}{\varepsilon} \right)_{+}^{\beta-3} (h_i^{\varepsilon,(1)}-\partial_{x_i}c(x))\, P^{-i}(dx^{-i}) \\
			\leq& \frac{\beta(\beta-1)(\beta-2)\lVert h_i^{\varepsilon,(1)}\rVert_{\infty}}{\varepsilon^2}  \left(\int_{X^{-i}} \left(\frac{h^{\varepsilon}(x)-c(x)}{\varepsilon} \right)_{+}^{\beta-1} \, P^{-i}(dx^{-i}) \right)^{\frac{\beta-3}{\beta-1}}  \leq \frac{C}{\varepsilon^2}.
			\end{align*}
			Similarly we have $f_i^{\varepsilon,(1)}(x_i) \leq C/\varepsilon^2$, and thus by \eqref{eq:h(k).derivative}, we also have \[|h^{\varepsilon,(2)}_i(x_i)| = |f^{\varepsilon,(1)}_i(x_i)/g_i^{\varepsilon}(x_i) + g^{\varepsilon,(1)}_i(x_i)h^{\varepsilon,(1)}_i(x_i) / g^{\varepsilon}_i(x_i)| \leq C/\varepsilon^2.\]
			Differentiating $g_i^{\varepsilon,(1)}(x_i)$, we obtain that $g_i^{\varepsilon, (2)}(x_i) \leq C \|h_i^{\varepsilon, (2)}\|_\infty/\varepsilon^2 \leq C/\varepsilon^{4}$,
			and the same bound for $f_i^{\varepsilon,(2)}$. Therefore, we have 
			\begin{align*}
			\left| h_i^{\varepsilon,(3)}(x_i) \right|= \left| \frac{f_i^{\varepsilon,(2)}(x_i)+g_i^{\varepsilon,(2)}(x_i)h_i^{\varepsilon,(1)}(x_i)+g_i^{\varepsilon,(1)}(x_i)h_i^{\varepsilon,(2)}(x_i)}{g_i^{\varepsilon}(x_i)} \right| \leq \frac{C}{\varepsilon^4}. 
			\end{align*}
			By induction, thanks to the explicit formula of $g_i^{\varepsilon}, f_i^{\varepsilon}$, it can be seen that for all $k=1,\dotso,\beta-2$,
			\begin{align*}
			\left| f_i^{\varepsilon,(k)}(x_i)\right|+\left| g_i^{\varepsilon,(k)}(x_i)\right| \leq \frac{C\lVert h_i^{\varepsilon,(k)} \rVert_{\infty}}{\varepsilon^2} \leq \frac{C}{\varepsilon^{2k}},
			\end{align*}	
			and according to \eqref{eq:h(k).derivative}
			\begin{align*}
			|h_i^{\varepsilon,(k+1)}(x_i)| \leq \frac{C}{\epsilon^{2k}}.
			\end{align*}

			To finalize, using the chain rule one can obtain upper bounds for partial derivatives of $\psi_{\varepsilon}(h^{\varepsilon}(x)-c(x))$,  
			\begin{align*}
			\left| D^{\alpha} \psi_{\varepsilon}(h^{\varepsilon}(x)-c(x)) \right| \leq \frac{C}{\varepsilon^{2(k-1)+\beta-1}}, \quad \text{for all $\alpha \in \mathbb{N}_0^d$ with $|\alpha| \leq k=1,\dotso,\beta-1$}. 
			\end{align*}
			
		\end{proof}
\end{lemma}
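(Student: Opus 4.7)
The plan is to retrace the argument of Proposition~\ref{prop:regularity} while carefully tracking how each step scales in $\varepsilon$, exploiting the explicit power form $\psi_\varepsilon(y) = (y_+)^\beta / \varepsilon^{\beta-1}$. The key observation is that the first order optimality condition $\beta \int y_+^{\beta-1} \, dP^{-i} = 1$ (where $y = (h^\varepsilon - c)/\varepsilon$) acts like a probability normalization that, through H\"older's inequality, provides $\varepsilon$-uniform control on every lower power $\int y_+^{\beta-j} \, dP^{-i}$ for $1 \leq j \leq \beta-1$.

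First I would repeat Step~1 of Proposition~\ref{prop:regularity} verbatim to conclude that $h_i^\varepsilon$ is $L$-Lipschitz with $L$ depending only on $c$ (and independent of $\varepsilon$), and that one can normalize so that $\sum_i \|h_i^\varepsilon\|_\infty \leq C_0$ independently of $\varepsilon$. In particular $|h^\varepsilon - c| \leq 2C_0$ everywhere, so on the set $\{y \geq 0\}$ we have $y_+^{\beta-2} \geq (\varepsilon/(2C_0)) y_+^{\beta-1}$. Plugging this into the definition of $g_i^\varepsilon$ and using the first order condition gives the $\varepsilon$-independent lower bound $g_i^\varepsilon \geq (\beta-1)/(2C_0)$, exactly as in the lemma statement.

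Next, I would iterate the recursion \eqref{eq:h(k).derivative} inductively in $k$, using H\"older at each step. Each differentiation of $\psi_\varepsilon^{(j)}$ contributes a factor $1/\varepsilon$ and lowers the exponent on $y_+$ by one. For example, $g_i^{\varepsilon,(1)}$ contains $\int y_+^{\beta-3}(h_i^{\varepsilon,(1)} - \partial_{x_i}c)\, dP^{-i}/\varepsilon^2$, and H\"older with exponents $(\beta-1)/(\beta-3)$ and $(\beta-1)/2$ against the measure $P^{-i}$ bounds this in terms of $(\int y_+^{\beta-1} dP^{-i})^{(\beta-3)/(\beta-1)} = \beta^{-(\beta-3)/(\beta-1)}$, which is $\varepsilon$-independent. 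This yields $|g_i^{\varepsilon,(1)}|, |f_i^{\varepsilon,(1)}| \leq C/\varepsilon^2$, and then \eqref{eq:h(k).derivative} gives $\|h_i^{\varepsilon,(2)}\|_\infty \leq C/\varepsilon^2$. The induction step is the claim that at level $k$ we have $\|h_i^{\varepsilon,(k)}\|_\infty \leq C/\varepsilon^{2(k-1)}$ and $\|f_i^{\varepsilon,(k-1)}\|_\infty + \|g_i^{\varepsilon,(k-1)}\|_\infty \leq C/\varepsilon^{2(k-1)}$; the inductive step comes from differentiating the explicit expressions for $f_i^\varepsilon$ and $g_i^\varepsilon$, observing that each term picks up at most one new $1/\varepsilon^2$ (one $1/\varepsilon$ from differentiating $\psi_\varepsilon^{(\cdot)}$ and one from the induction hypothesis on $h_i^{\varepsilon,(\cdot)}$), combined with the uniform H\"older bound on the resulting $y_+^{\beta-j}$-integrals. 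The Lipschitz statement at $k = \beta-2$ follows by differentiating once more (in the weak sense, exactly as in Proposition~\ref{prop:regularity}) and reading off the same estimate with $k$ replaced by $\beta-1$.

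Finally, for the norm bound on $h^\varepsilon - \psi_\varepsilon(h^\varepsilon - c)$, I would apply Fa\`a di Bruno / chain rule to $\psi_\varepsilon(h^\varepsilon - c)$: the $k$-th derivative is a sum of products of derivatives $\psi_\varepsilon^{(j)}(h^\varepsilon - c)$ and derivatives of $(h^\varepsilon - c)$. Since $|h^\varepsilon - c| \leq 2C_0$, each factor $\psi_\varepsilon^{(j)}(h^\varepsilon - c)$ is bounded by $C/\varepsilon^{\beta-1}$, and each factor $D^\alpha (h^\varepsilon - c)$ with $|\alpha| \leq k$ is bounded by $C/\varepsilon^{2(k-1)}$ from the previous step. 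Combining, $\|D^\alpha \psi_\varepsilon(h^\varepsilon - c)\|_\infty \leq C/\varepsilon^{(\beta-1) + 2(k-1)}$ for $|\alpha| \leq k$, and taking $k = \beta-1$ (for the Lipschitz norm at level $\beta-2$) gives the announced $C/\varepsilon^{3\beta - 5}$. Since this bound dominates the bound $C/\varepsilon^{2(\beta-2)}$ on $h^\varepsilon$ in the same norm, the total is of the same order $\varepsilon^{-(3\beta-5)}$.

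The main obstacle, and where care is really needed, is the inductive H\"older bookkeeping in the middle step: one must verify that all of the $y_+^{\beta-j}$-integrals produced by repeated differentiation of $f_i^\varepsilon$ and $g_i^\varepsilon$ admit a uniform bound coming solely from the normalization $\int y_+^{\beta-1} dP^{-i} = 1/\beta$, so that no hidden extra power of $1/\varepsilon$ sneaks in. The strict positivity of $g_i^\varepsilon$ uniformly in $\varepsilon$, established in the second step, is what ultimately allows \eqref{eq:h(k).derivative} to be closed without any degradation beyond the expected $\varepsilon^{-2}$ per additional derivative.
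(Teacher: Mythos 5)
Your proposal is correct and follows essentially the same route as the paper's proof: the $\varepsilon$-independent Lipschitz bound and normalization from Step 1 of Proposition~\ref{prop:regularity}, the uniform lower bound on $g_i^\varepsilon$ via $y_+^{\beta-2}\geq(\varepsilon/(2C_0))y_+^{\beta-1}$ and the first order condition, the H\"older bound of the lower-power integrals against $\int y_+^{\beta-1}\,dP^{-i}=1/\beta$, the induction through \eqref{eq:h(k).derivative} giving the $\varepsilon^{-2}$ loss per derivative, and the final chain-rule bound $\varepsilon^{-(\beta-1)-2(k-1)}$ yielding $\varepsilon^{-(3\beta-5)}$ at $k=\beta-1$. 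No substantive differences.
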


\begin{acks}[Acknowledgments]
The authors thank Daniel Bartl and Beno\^it Kloeckner for helpful comments. We are further grateful to an anonymous reviewer and the associate editor for their thorough reading and careful feedback which greatly improved the paper.
\end{acks}

\begin{funding}
Erhan Bayraktar is supported in part by the National Science Foundation under DMS-2106556, and in part by the Susan M. Smith Professorship.
\end{funding}

\bibliographystyle{imsart-nameyear} 
\bibliography{ref_ber}       

%
%
%
%
%

\end{document}